\documentclass[11pt]{amsart}

\usepackage[utf8]{inputenc}
\usepackage[T1]{fontenc}
\usepackage{amsmath, amssymb, amsthm}
\usepackage{mathrsfs}
\usepackage{geometry}
\usepackage{enumitem}
\usepackage{booktabs}
\usepackage{array}
\usepackage{graphicx}
\usepackage{float}
\usepackage{hyperref}

\hypersetup{
    colorlinks=true,
    linkcolor=blue,
    citecolor=blue,
    urlcolor=blue,
    pdftitle={Casorati Inequalities for Pointwise Slant Riemannian Submersions},
    pdfauthor={Tanveer Fatima}
}

\theoremstyle{plain}
\newtheorem{theorem}{Theorem}[section]
\newtheorem{corollary}{Corollary}[section]
\newtheorem{proposition}{Proposition}[section]
\newtheorem{lemma}[proposition]{Lemma}

\theoremstyle{definition}
\newtheorem{definition}[proposition]{Definition}
\newtheorem{example}{Example}[section]

\theoremstyle{remark}
\newtheorem{remark}[proposition]{Remark}

\DeclareMathOperator{\Span}{span}

\newcommand{\R}{\mathbb{R}}
\newcommand{\C}{\mathbb{C}}

\newcommand{\calH}{\mathcal{H}}
\newcommand{\calV}{\mathcal{V}}

\title[Casorati Inequalities for Pointwise Slant Submersions]{Casorati Inequalities along Vertical and Horizontal Distributions of Pointwise Slant Riemannian Submersions}
\author{Tanveer Fatima$^{1,2}$}
\address{$^{1}$Department of Mathematics and Statistics, College of Science in Yanbu, Taibah University, Yanbu Governorate, Saudi Arabia}
\address{$^{2}$ Health and Life Research Center, Taibah University, Madinah, Saudi Arabia}
\email{tansari@taibahu.edu.sa}

\author{Sana Abdulkarem Alharbi$^{1,2,*}$}
\address{$^{1}$Department of Mathematics and Statistics, College of Science in Yanbu, Taibah University, Yanbu Governorate, Saudi Arabia}
\address{$^{2}$ Health and Life Research Center, Taibah University, Madinah, Saudi Arabia}
\email{$^{*}$saaharbi@taibahu.edu.sa}
\thanks{$^{*}$Corresponding author.}

\author{Sharief Deshmukh$^{3}$}
\address{$^{3}$Department of Mathematics, College of Science, King Saud University, P.O. Box 2455, Riyadh 11451, Saudi Arabia}
\email{shariefd@ksu.edu.sa}

\subjclass[2020]{53C20, 53C25, 53C40, 53C42, 53B25}
\keywords{Riemannian submersions, O'Neill tensors, space forms, Casorati curvatures, pointwise slant submersions, optimal inequalities}

\begin{document}

\begin{abstract}
We establish optimal Casorati inequalities for pointwise slant Riemannian
submersions. As such a submersion carries two distinct distributions, the
vertical, tangent to the fibers, and the horizontal, governed by O'Neill's
tensors $T$ and $A$, we treat them separately. For submersions from generalized
complex and generalized Sasakian space forms we bound the normalized scalar
curvature of each distribution by its normalized Casorati curvatures. Special cases recover sharp
inequalities for real, complex, K\"ahler, Sasakian, Kenmotsu, cosymplectic, and
almost $C(\alpha)$ space forms. Equality is characterized geometrically:
invariantly quasi-umbilical fibers in the vertical case and integrability
($A\equiv 0$) in the horizontal case. The invariant and anti-invariant limits
reproduce known results, and examples illustrate sharpness.
\end{abstract}

\maketitle

\section{Introduction}
A recurring difficulty with Gaussian curvature is that it may be zero even where
a surface is visibly curved, so it does not always reflect the shape of a
surface as it sits in space. It was precisely this gap that led Casorati
\cite{Casorati1890} in 1890 to propose a different extrinsic invariant, today
known as the Casorati curvature. Unlike the Gaussian curvature, it vanishes
exactly at planar points, which makes it a natural tool for shape analysis and
visualization \cite{Decu2008, Ons2011} and places it within the broader study of
curvature invariants that has shaped differential geometry since the work of
Gauss and Riemann.

Interest in Casorati inequalities in their present form was sparked by Decu,
Haesen, and Verstraelen \cite{Decu2008}, who building on Chen's program of
$\delta$-invariants \cite{Chen1990, Chen2011} derived sharp estimates bounding
the scalar curvature of a submanifold of a real space form by its normalized
Casorati curvatures. Their work prompted an extensive line of research, with sharp Casorati
inequalities established across a wide range of ambient spaces and submanifold
classes. Among these contributions, Aquib et al.\ \cite{Aquib2019, Aquib2018}
treated Lagrangian and bi-slant submanifolds of complex space forms; Lee et al.\
\cite{Lee2020a, Lee2022, Lee2017} addressed Legendrian submanifolds in Sasakian
and Kenmotsu space forms; Lone \cite{Lone2017a, Lone2017b, Lone2019, Lone2019a}
examined slant submanifolds of generalized complex and Sasakian space forms;
Vilcu \cite{Vilcu2018} considered Lagrangian submanifolds of complex space
forms; Zhang et al.\ \cite{Zhang2016, Zhang2016a} dealt with submanifolds of
real and complex space forms; and Siddiqui \cite{Siddiqui2018} studied bi-slant
submanifolds of generalized Sasakian space forms. A broader overview can be
found in the survey of Chen \cite{Chen2021}.

Parallel to submanifold theory, Riemannian submersions emerged as important
objects with applications in theoretical physics, particularly in Kaluza--Klein
theory and supergravity \cite{Falcitelli2004, ONiell1966}. O'Neill
\cite{ONiell1966} developed the fundamental equations of a submersion,
introducing two tensor fields $T$ and $A$: the tensor $T$ measures the failure
of the fibers to be totally geodesic, while $A$ measures the failure of the
horizontal distribution to be integrable. This is the central structural
feature that distinguishes submersions from submanifolds: a submersion carries
{two} natural distributions with genuinely different geometric meanings.
The vertical distribution $\calV=\ker F_*$ is tangent to the fibers and its
extrinsic geometry is encoded by $T$, whereas the horizontal distribution
$\calH=(\ker F_*)^{\perp}$ is governed by $A$. In
\cite{Sahin2010a, Sahin2013, Sahin2017}, Sahin developed a comprehensive theory
of submersions in Hermitian geometry, and these structures are special cases of
the Riemannian maps introduced by Fischer \cite{Fischer1992}.

The Casorati geometry of these two distributions must therefore be analyzed
separately, and the resulting equality conditions differ: invariant
quasi-umbilicity of the fibers on the vertical side, and integrability of the
horizontal distribution on the horizontal side. This two-distribution
phenomenon is precisely what separates the submersion problem from the Casorati
theory of Riemannian maps. For a Riemannian map the differential need not be
surjective, and the extrinsic geometry of the map is carried by a single
object, its second fundamental form $\nabla F_*$, which takes values in the
orthogonal complement $(\operatorname{range} F_*)^{\perp}$ of the range inside
the target; the corresponding Casorati inequality bounds this one tensor. A
Riemannian submersion is exactly the case in which $F_*$ is onto, so
$(\operatorname{range} F_*)^{\perp}=0$ and that tensor vanishes identically. The
extrinsic geometry then passes entirely to the source, where it is shared
between the two O'Neill tensors acting on orthogonal distributions, $T$, the
second fundamental form of the fibers, on $\calV$, and $A$, the obstruction to
horizontal integrability, on $\calH$. Since $T$ and $A$ encode independent
curvature data through O'Neill's equations and act on different distributions,
there is no single ambient form to optimize; each distribution instead yields
its own optimal Casorati inequality, which is the structural reason the
submersion setting requires two separate families of results.

Casorati inequalities were subsequently extended to Riemannian maps and
submersions. Lee et al.\ \cite{Lee2021} obtained optimal inequalities in real
and complex space forms, Polat et al.\ \cite{Polat2025} extended these to
Sasakian space forms, and Zaidi and Shanker \cite{Zaidi2024} treated bi-slant
maps to Kenmotsu manifolds, while Fatima et al.\ \cite{Fatima2025} investigated
nearly K\"ahler targets. However, all of these results treated only
{invariant} ($\theta=0$) or {anti-invariant} ($\theta=\pi/2$)
structures. Our results contain these as limiting cases. Set $\theta=0$, Theorems~\ref{thm:complex_subm_vert}
and~\ref{thm:sasakian_subm_vert} recover the invariant submersion inequalities
of \cite[Theorem~4.1]{Lee2021} and \cite[Theorem~5.1]{Polat2025} (as corrected
in~\cite{Polat2025a}), and Theorem~\ref{thm:complex_subm_horiz} recovers
\cite[Theorem~4.2]{Lee2021}; set $\theta=\pi/2$ eliminates the
$c_2$-terms and yields the corresponding anti-invariant submersion inequalities of
\cite{Lee2021, Polat2025} and the special case of \cite{Zaidi2024}. The full
slant parameter $\theta$ interpolates between these extremes.

The notion of slant submanifolds was introduced by Chen \cite{Chen1990} as a
natural generalization of invariant and anti-invariant submanifolds and was
extended to contact geometry \cite{Yano1984, Blair2010}. Etayo \cite{Etayo1998}
introduced {pointwise} slant submanifolds, where the angle varies from
point to point. For Riemannian maps, pointwise slant structures were studied by
Park \cite{Park2015}, G\"und\"uzalp and Akyol \cite{GunduzalpAkyol2022}, and, in
the horizontal Reeb case, by Demir and Yildirim \cite{DemirYildirim2025}. For
submersions, the pointwise slant theory was developed by Lee and Sahin
\cite{LeeSahin2014} from almost Hermitian manifolds and by Kumar--Prasad
\cite{Kumar2020} and Prasad--Kumar \cite{Prasad2021} from almost contact
manifolds. These structures unify the invariant, anti-invariant, and all
intermediate cases through a single varying slant angle. The parallel theory for pointwise slant Riemannian maps is developed in the
companion paper~\cite{FatimaMaps}.

Despite the extensive development of (a) Casorati inequalities for
invariant/anti-invariant submersions and (b) the pointwise slant theory of
submersions, Casorati inequalities for the {pointwise slant} case remain
unestablished. This gap is significant: geometrically, pointwise slant is the
general case, with the invariant and anti-invariant settings as limits;
physically, theories on the total space may involve spatially varying angles;
mathematically, a variable $\theta$ introduces new technical challenges that
require a careful, distribution-by-distribution optimization.

We address this gap by establishing comprehensive Casorati inequalities for
pointwise slant Riemannian submersions. Our contributions are as follows. First,
working separately on the vertical and horizontal distributions, we derive
explicit inequalities when the total space is a generalized complex space form
(Theorems~\ref{thm:complex_subm_vert} and~\ref{thm:complex_subm_horiz}) and a
generalized Sasakian space form (Theorems~\ref{thm:sasakian_subm_vert}
and~\ref{thm:sasakian_subm_horiz}), covering all classical space forms
(Corollaries~\ref{cor:subm_vert_real}, \ref{cor:subm_vert_sasakian},
\ref{cor:subm_horiz_real}, and~\ref{cor:subm_horiz_sasakian}). Second, we
characterize the equality cases: invariantly quasi-umbilical fibers
(Definition~\ref{def:quasi-umbilical}, Remark~\ref{rem:quasi-umbilical}) for the
vertical inequalities, and integrability of the horizontal distribution
(equivalently $A\equiv 0$) for the horizontal inequalities. Third, we verify
the reduction to the known results \cite{Lee2021, Polat2025, Zaidi2024} when
$\theta=0$ or $\theta=\pi/2$. Finally, we provide examples
(Examples~\ref{ex:subm1}--\ref{ex:subm3}) demonstrating sharpness and the role
of varying slant function.

The paper is organized as follows. Section~\ref{sec:preliminaries} collects the
preliminaries on Riemannian submersions, O'Neill's tensors $T$ and $A$, the
relevant space forms, and the algebraic optimization Lemma~\ref{lem:tripathi}.
Section~\ref{sec:pointwise_slant} develops pointwise slant structures for
submersions from almost Hermitian and from almost contact metric manifolds, and
proves the unifying norm identity (Proposition~\ref{prop:key_slant_identities}).
Section~\ref{sec:vertical} establishes the Casorati inequalities for the
vertical distribution, and Section~\ref{sec:horizontal} those for the horizontal
distribution, each for generalized complex and generalized Sasakian space forms.
Section~\ref{sec:examples} provides illustrative examples, and
Section~\ref{sec:discussion} offering concluding remark.

\section{Preliminaries}\label{sec:preliminaries}

In this section we establish the fundamental concepts needed for our main
theorems. Throughout, all manifolds are smooth, connected, and without
boundary.

\subsection{Riemannian Submersions}

Let $F:(M_1^{m_1},g_1)\to(M_2^{m_2},g_2)$ be a smooth submersion between
Riemannian manifolds. For $p\in M_1$, the kernel $\ker F_{*p}$ of the
differential and its orthogonal complement $(\ker F_{*p})^{\perp}$ give the
orthogonal decomposition $T_pM_1=\ker F_{*p}\oplus(\ker F_{*p})^{\perp}$. The
map $F$ is a {Riemannian submersion} if $F_{*p}:(\ker F_{*p})^{\perp}\to
T_{F(p)}M_2$ is a linear isometry for every $p\in M_1$ \cite{ONiell1966,
Falcitelli2004}.

For a Riemannian submersion $F : M_1 \to M_2$, we denote the 
vertical and horizontal distributions by 
$\calV = \ker F_*$ and $\calH = (\ker F_*)^\perp$, 
respectively. O'Neill \cite{ONiell1966} introduced two 
fundamental tensor fields $T$ and $A$, defined for vector 
fields $E, G$ on $M_1$ by
\begin{align}
T_E G &= \calH\nabla_{\calV E}\calV G 
        + \calV\nabla_{\calV E}\calH G, 
        \label{eq:tensor_T}\\
A_E G &= \calV\nabla_{\calH E}\calH G 
        + \calH\nabla_{\calH E}\calV G. 
        \label{eq:tensor_A}
\end{align}
Here $\calV$ and $\calH$ denote the vertical and horizontal 
projections, respectively. The tensor $T$ measures the 
obstruction to the fibers being totally geodesic, while $A$ 
measures the obstruction to the horizontal distribution being 
integrable.

\begin{proposition}[\cite{ONiell1966}]
\label{prop:oneill_tensors}
For a Riemannian submersion, the tensors $T$ and $A$ satisfy:
\begin{enumerate}[label=(\roman*)]
\item $T_U V$ is symmetric for $U,V \in \Gamma(\calV)$ and 
      $T_U W \in \Gamma(\calH)$ for $U \in \Gamma(\calV)$, 
      $W \in \Gamma(\calH)$.
\item $A_X Y$ is skew-symmetric for $X,Y \in \Gamma(\calH)$ 
      and $A_X U \in \Gamma(\calV)$ for $X \in \Gamma(\calH)$, 
      $U \in \Gamma(\calV)$.
\item For $U,V \in \Gamma(\calV)$: 
      $T_U V = \dfrac{1}{2}\calV[U,V]$.
\item For $X,Y \in \Gamma(\calH)$: 
      $A_X Y = \dfrac{1}{2}\calV[X,Y]$.
\end{enumerate}
\end{proposition}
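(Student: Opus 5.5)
The plan is to work directly from the definitions \eqref{eq:tensor_T} and \eqref{eq:tensor_A}, using three standard facts. First, $T$ and $A$ are tensorial, i.e.\ $C^\infty(M_1)$-linear in both arguments; this follows because each is built from projections of $\nabla$, and any derivative of a function coefficient lands in a component that the projection kills. Second, the vertical distribution $\calV$ is integrable, being tangent to the fibres $F^{-1}(q)$. Third, we may use \emph{basic} horizontal fields $X$, i.e.\ horizontal fields $F$-related to fields on $M_2$; these span $\calH$ pointwise and satisfy $[X,U]\in\Gamma(\calV)$ for every vertical $U$. By tensoriality, every identity only needs to be checked on vertical fields and on basic horizontal fields.

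\textbf{The mixed statements.} For $U$ vertical and $W$ horizontal, \eqref{eq:tensor_T} reduces to $T_UW=\calV\nabla_UW$. For $X$ horizontal and $U$ vertical, \eqref{eq:tensor_A} reduces to $A_XU=\calH\nabla_XU$. The type statements in (i) and (ii) then follow from the definitions. I would also record the skew-adjointness $g_1(T_EG,H)=-g_1(G,T_EH)$, and the same for $A$, since metric compatibility gives it at once.

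\textbf{Symmetry of $T$ and the vertical bracket.} For $U,V$ vertical, $T_UV=\calH\nabla_UV$, so
\[
T_UV-T_VU=\calH(\nabla_UV-\nabla_VU)=\calH[U,V]=0
\]
by integrability of $\calV$. This proves the symmetry claim in (i). Item (iii) as printed cannot hold literally: its left side is horizontal and its right side is vertical. Its correct content is the identity just displayed, $T_UV-T_VU=\calH[U,V]=0$, and I would prove it in that form.

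\textbf{Skew-symmetry of $A$ on $\calH$; this is the main step.} For basic $X,Y$ we have $A_XY=\calV\nabla_XY$. It suffices to show $\calV\nabla_XX=0$ for basic $X$ and then polarize. Take $U$ vertical and apply Koszul's formula:
\[
2g_1(\nabla_XX,U)=2X\,g_1(X,U)-U\,g_1(X,X)+2g_1([U,X],X).
\]
The first term vanishes because $g_1(X,U)=0$. The third vanishes because $[U,X]$ is vertical for basic $X$. For the middle term, $g_1(X,X)=g_2(F_*X,F_*X)\circ F$ is constant along fibres, so it is killed by the vertical field $U$. Hence $\calV\nabla_XX=0$. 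Polarizing, $A_XY=-A_YX$ on basic fields, and by tensoriality on all of $\calH$; this is (ii). Finally, (iv) follows from
\[
A_XY-A_YX=\calV(\nabla_XY-\nabla_YX)=\calV[X,Y],
\]
which combined with skew-symmetry gives $A_XY=\tfrac12\calV[X,Y]$.

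The only delicate point is the Koszul computation, specifically justifying that $[U,X]$ is vertical and that $g_1(X,X)$ is fibrewise constant for basic $X$. Both follow from $F$-relatedness: $U$ is $F$-related to the zero field, so $[U,X]$ is $F$-related to $[0,F_*X]=0$, and $F_*$ restricted to $\calH$ is an isometry.
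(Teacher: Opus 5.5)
Your paragraph on the mixed statements contains a real error. You correctly derive $T_UW=\calV\nabla_UW$ and $A_XU=\calH\nabla_XU$, but then say the type statements in (i) and (ii) ``follow from the definitions''. Your own formulas show the opposite: $T_UW\in\Gamma(\calV)$ and $A_XU\in\Gamma(\calH)$. The statement asserts $T_UW\in\Gamma(\calH)$ and $A_XU\in\Gamma(\calV)$.

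These printed claims are false in general, not merely unproved. If $T_UW$ were horizontal as well as vertical, it would vanish for all vertical $U$ and horizontal $W$. The skew-adjointness you record, $g_1(T_UV,W)=-g_1(V,T_UW)$, would then give $T_UV=0$, i.e.\ totally geodesic fibres. This fails for $x\mapsto|x|$ on $\R^2\setminus\{0\}$, whose fibres are circles. Likewise, $A_XU\in\Gamma(\calV)$ would force $A_XU=0$, hence $g_1(A_XY,U)=-g_1(Y,A_XU)=0$, i.e.\ $A\equiv0$. This fails for the Hopf fibration $S^3\to S^2(1/2)$, whose horizontal distribution is not integrable.

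So the second halves of (i) and (ii) are misprints of exactly the same kind as (iii). You should flag them as you did (iii) and prove the correct versions instead. These are O'Neill's statement that $T_U$ and $A_X$ interchange the vertical and horizontal subspaces: $T_UV,\,A_XU\in\Gamma(\calH)$ and $T_UW,\,A_XY\in\Gamma(\calV)$. Your two displayed formulas give exactly this.

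The rest of your proposal is sound. The paper gives no proof of this proposition, only the reference to O'Neill. Your arguments for the parts that are genuinely true are O'Neill's standard ones and are correct:
\begin{itemize}
\item tensoriality;
\item $T_UV=T_VU$ from integrability of $\calV$;
\item $\calV\nabla_XX=0$ for basic $X$ via the Koszul formula, using fibrewise constancy of $g_1(X,X)$ and verticality of $[U,X]$;
\item polarization to get $A_XY=-A_YX$;
\item torsion-freeness for (iv).
\end{itemize}
Your diagnosis that (iii) is misprinted is also right; its correct content is $T_UV=T_VU$, equivalently $\calH[U,V]=0$.
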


The curvature properties of Riemannian submersions are 
described by the following Gauss-type equations.

\begin{theorem}[\cite{Falcitelli2004}]\label{thm:gauss_vertical}
For a Riemannian submersion $F: M_1 \to M_2$ and for 
$U,V,W,Z \in \Gamma(\calV)$,
\begin{align}\label{eq:gauss_vertical}
g_1(R^{M_1}(U,V)W,Z) = & g_1(\hat{R}(U,V)W,Z)- g_1(T_U W, T_V Z) \nonumber\\
& + g_1(T_V W, T_U Z),
\end{align}
where $\hat{R}$ denotes the curvature tensor of the fibers.
\end{theorem}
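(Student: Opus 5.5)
The plan is to derive this Gauss-type equation for the fibers directly from the definition of $T$ in \eqref{eq:tensor_T}, treating each fiber as a submanifold of $M_1$ whose second fundamental form is $T$. For vertical fields $V,W$ we decompose
\[
\nabla^{M_1}_V W = \calV\nabla^{M_1}_V W + \calH\nabla^{M_1}_V W = \hat\nabla_V W + T_V W .
\]
Here $\hat\nabla_V W := \calV\nabla_V W$ is the Levi-Civita connection of the induced metric on the fibers: it is torsion free by Proposition~\ref{prop:oneill_tensors}(iii), and it is metric because $\nabla^{M_1}$ is. The identity $\calH\nabla_V W = T_V W$ is immediate from \eqref{eq:tensor_T}, since $\calH W = 0$.

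Next we expand $R^{M_1}(U,V)W = \nabla_U\nabla_V W - \nabla_V\nabla_U W - \nabla_{[U,V]}W$ and pair it with a vertical $Z$. Because the fibers are integral manifolds, $[U,V]$ is vertical. This gives
\[
g_1(\nabla_U\nabla_V W, Z) = g_1(\hat\nabla_U\hat\nabla_V W, Z) + g_1(\nabla_U (T_V W), Z).
\]
The second term is rewritten using metric compatibility and $g_1(T_V W, Z) = 0$:
\[
g_1(\nabla_U (T_V W), Z) = -g_1(T_V W, \nabla_U Z) = -g_1(T_V W, T_U Z).
\]
In the last equality only the horizontal part of $\nabla_U Z$ contributes, and that part is $T_U Z$. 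The term with $U$ and $V$ interchanged gives $+g_1(T_U W, T_V Z)$. Finally, $g_1(\nabla_{[U,V]}W, Z) = g_1(\hat\nabla_{[U,V]}W, Z)$.

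Collecting terms yields
\[
g_1(R^{M_1}(U,V)W,Z) = g_1(\hat R(U,V)W,Z) - g_1(T_V W, T_U Z) + g_1(T_U W, T_V Z).
\]
The sign arrangement of the last two terms depends on the convention for $R$. The stated formula, with $-g_1(T_UW,T_VZ) + g_1(T_VW,T_UZ)$, matches O'Neill's convention $R(U,V) = \nabla_{[U,V]} - [\nabla_U,\nabla_V]$ as used in \cite{Falcitelli2004}. So I would either fix that convention explicitly or note that the computation is the same up to an overall sign, applied consistently to both $R^{M_1}$ and $\hat R$.

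The only real subtlety, and the step I would check most carefully, is this curvature sign convention together with the symmetry of $T$ on vertical fields. Everything else is the standard Gauss equation argument with $T$ playing the role of the second fundamental form.
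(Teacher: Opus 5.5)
Your argument is correct and is the standard derivation behind this cited result; the paper itself gives no proof and defers to \cite{Falcitelli2004}. The fibres are submanifolds of $M_1$ with second fundamental form $T|_{\calV\times\calV}$, and \eqref{eq:gauss_vertical} is their Gauss equation. Your convention remark is also right and worth sharpening: reversing the sign of both $R^{M_1}$ and $\hat R$ is the same as reversing the $T$-terms, so the printed formula holds in O'Neill's convention. In the convention this paper uses elsewhere, fixed by \eqref{eq:gen_complex_curvature} (that is, $R(X,Y)=[\nabla_X,\nabla_Y]-\nabla_{[X,Y]}$), the valid identity is your derived version $g_1(R^{M_1}(U,V)W,Z)=g_1(\hat R(U,V)W,Z)+g_1(T_UW,T_VZ)-g_1(T_VW,T_UZ)$. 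Also, torsion-freeness of $\hat\nabla$ comes from $[V,W]$ being vertical, not from Proposition~\ref{prop:oneill_tensors}(iii) as printed.
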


\begin{theorem}[\cite{Falcitelli2004}]
\label{thm:gauss_horizontal}
For a Riemannian submersion and for 
$X,Y,Z,W \in \Gamma(\calH)$,
\begin{align}\label{eq:gauss_horizontal}
g_1(R^{M_1}(X,Y)Z,W) = &g_1(R^{\calH}(X,Y)Z,W)- 2g_1(A_X Y, A_Z W) \nonumber\\
& + g_1(A_Y Z, A_X W) 
 - g_1(A_X Z, A_Y W),
\end{align}
where $R^{\calH}$ is the curvature tensor of the horizontal 
distribution.
\end{theorem}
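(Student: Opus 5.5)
The plan is to compute directly with O'Neill's decomposition of the Levi-Civita connection $\nabla$ of $g_1$ on horizontal fields. Since the identity is tensorial in $X,Y,Z,W$, it suffices to verify it at a point for \emph{basic} horizontal vector fields, i.e.\ horizontal fields $F$-related to vector fields on $M_2$. For such fields $\nabla_X Y=\calH\nabla_X Y+A_XY$ by \eqref{eq:tensor_A}. Here $\calH\nabla_XY$ is again basic and $F$-related to $\nabla^{M_2}_{F_*X}F_*Y$, and it defines the horizontal connection whose curvature is $R^{\calH}$; equivalently $R^{\calH}$ is the horizontal lift of $R^{M_2}$. I will also use two standard facts. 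First, Proposition~\ref{prop:oneill_tensors}(iv) gives $\calV[X,Y]=2A_XY$. Second, for a vertical $V$ and a basic $Z$ the bracket $[V,Z]$ is vertical, because $Z$ is $F$-related to a field on $M_2$ while $V$ is $F$-related to $0$.

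First, expand $g_1(\nabla_X\nabla_YZ,W)$ by inserting $\nabla_YZ=\calH\nabla_YZ+A_YZ$. The horizontal piece gives $g_1(\calH\nabla_X\calH\nabla_YZ,W)$, which is part of $R^{\calH}$. For the vertical piece, metric compatibility and $g_1(A_YZ,W)=0$ give
\[
g_1(\nabla_X(A_YZ),W)=-g_1(A_YZ,\nabla_XW)=-g_1(A_YZ,A_XW),
\]
where the last step uses that $A_YZ$ is vertical, so only the vertical part $A_XW$ of $\nabla_XW$ survives. The term with $X,Y$ interchanged is handled symmetrically and gives $+g_1(A_XZ,A_YW)$.

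Second, split the bracket term as $\nabla_{[X,Y]}Z=\nabla_{\calH[X,Y]}Z+\nabla_{2A_XY}Z$. The horizontal part supplies the bracket term of $R^{\calH}$. For the vertical part put $V=A_XY$. Since $[V,Z]$ is vertical, $g_1(\nabla_VZ,W)=g_1(\nabla_ZV,W)=g_1(A_ZV,W)$. Skew-symmetry of $A_Z$ from Proposition~\ref{prop:oneill_tensors}(ii) (as an operator, $g_1(A_ZV,W)=-g_1(V,A_ZW)$) then gives $g_1(\nabla_VZ,W)=-g_1(A_XY,A_ZW)$. This is the source of the coefficient $2$ in front of $g_1(A_XY,A_ZW)$.

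Finally, collect the terms: the horizontal pieces reassemble into $g_1(R^{\calH}(X,Y)Z,W)$, and the remaining terms are the three $A$-terms. The main obstacle is bookkeeping of signs, since the signs in the final formula depend on the curvature convention. I will fix the convention for $R^{M_1}$ used in \cite{Falcitelli2004}, either $R(X,Y)=[\nabla_X,\nabla_Y]-\nabla_{[X,Y]}$ or its negative together with the order of arguments in $g_1(R(X,Y)Z,W)$. I will then check that the three $A$-terms come out with signs $-2,+1,-1$ as displayed in \eqref{eq:gauss_horizontal}. If the direct computation produces the opposite overall sign on the $A$-terms, that indicates the opposite convention, and the identity follows after relabelling $Z\leftrightarrow W$ using the antisymmetries of $R$.
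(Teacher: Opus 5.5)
Your argument is correct: it is the standard O'Neill computation with basic horizontal fields, which is the proof behind the result the paper cites from Falcitelli--Ianus--Pastore without reproducing it. To settle the sign you left open, note three things.
\begin{itemize}
\item With $R(X,Y)=[\nabla_X,\nabla_Y]-\nabla_{[X,Y]}$ your steps yield $+2g_1(A_XY,A_ZW)-g_1(A_YZ,A_XW)+g_1(A_XZ,A_YW)$. So \eqref{eq:gauss_horizontal} as displayed is O'Neill's form for the convention $R(X,Y)=\nabla_{[X,Y]}-[\nabla_X,\nabla_Y]$.
\item Relabelling $Z\leftrightarrow W$ only multiplies the whole identity by $-1$. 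It is therefore the same change of convention, not a separate fix.
\item The operator skewness $g_1(A_ZV,W)=-g_1(V,A_ZW)$ that you use comes from $g_1(\nabla_ZV,W)=-g_1(V,\nabla_ZW)$. It does not follow from Proposition~\ref{prop:oneill_tensors}(ii), which only asserts $A_XY=-A_YX$.
\end{itemize}
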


\subsection{Space Forms}

We now recall the ambient spaces used throughout the paper.

\begin{definition}[\cite{Olszak1989, Tricerri1981}]
\label{def:gen_complex_space_form}
An almost Hermitian manifold $(M,g,J)$ is called a 
{generalized complex space form}, denoted $M(c_1,c_2)$, 
if its curvature tensor satisfies
\begin{align}\label{eq:gen_complex_curvature}
R(X,Y)Z &= c_1\{g(Y,Z)X - g(X,Z)Y\} \nonumber\\
&\quad + c_2\{g(X,JZ)JY - g(Y,JZ)JX 
            + 2g(X,JY)JZ\}
\end{align}
for smooth functions $c_1, c_2$ on $M$.
\end{definition}

Important special cases include:
\begin{enumerate}[label=(\roman*)]
\item {Real space forms}: $c_1 = c$, $c_2 = 0$ 
      (constant sectional curvature $c$).
\item {Complex space forms}: $c_1 = c_2 = c/4$ 
      (K\"{a}hler, constant holomorphic sectional curvature $c$).
\item {Real K\"{a}hler space forms}: 
      $c_1 = (c+3\alpha)/4$, $c_2 = (c-\alpha)/4$.
\end{enumerate}

Recall that an almost contact metric manifold $(M,\phi,\xi,\eta,g)$
carries a $(1,1)$-tensor field $\phi$, a Reeb vector field $\xi$, and a
$1$-form $\eta$ satisfying $\phi^2=-I+\eta\otimes\xi$, $\eta(\xi)=1$, and
$\phi\xi=0$ \cite{Blair2010,Yano1984}. In particular $\phi^2=-I$ on $\ker\eta=\{X\in TM:\eta(X)=0\}$, where
$\phi$ acts as an isometry.

\begin{definition}[\cite{Alegre2004, Blair2010}]
\label{def:gen_sasakian_space_form}
An almost contact metric manifold $(M,\phi,\xi,\eta,g)$ is 
called a {generalized Sasakian space form}, denoted 
$M(c_1,c_2,c_3)$, if
\begin{align}\label{eq:gen_sasakian_curvature}
R(X,Y)Z &= c_1\{g(Y,Z)X - g(X,Z)Y\} \nonumber\\
&\quad + c_2\{g(X,\phi Z)\phi Y - g(Y,\phi Z)\phi X 
            + 2g(X,\phi Y)\phi Z\} \nonumber\\
&\quad + c_3\{\eta(X)\eta(Z)Y - \eta(Y)\eta(Z)X 
            + g(X,Z)\eta(Y)\xi - g(Y,Z)\eta(X)\xi\}
\end{align}
for smooth functions $c_1, c_2, c_3$ on $M$.
\end{definition}

Special cases include:
\begin{enumerate}[label=(\roman*)]
\item {Sasakian space forms}: 
      $c_1 = (c+3)/4$, $c_2 = c_3 = (c-1)/4$.
\item {Kenmotsu space forms}: 
      $c_1 = (c-3)/4$, $c_2 = c_3 = (c+1)/4$.
\item {Cosymplectic space forms}: 
      $c_1 = c_2 = c_3 = c/4$.
\item {Almost $C(\alpha)$ space forms}: 
      $c_1 = (c+3\alpha^2)/4$, 
      $c_2 = c_3 = (c-\alpha^2)/4$.
\end{enumerate}

\subsection{A Key Algebraic Lemma}

The following lemma, due to Tripathi \cite{Tripathi2017}, 
provides the optimization result central to our proofs. It 
identifies the constrained minimum of a specific quadratic 
form on a hyperplane in $\mathbb{R}^r$.

\begin{lemma}[\cite{Tripathi2017}]\label{lem:tripathi}
Let $\Lambda = \{(z_1,\ldots,z_r) \in \mathbb{R}^r : 
z_1 + \cdots + z_r = k\}$ be a hyperplane in $\mathbb{R}^r$, 
and let $f: \mathbb{R}^r \to \mathbb{R}$ be the quadratic form
\begin{equation}\label{eq:quadratic_form_lemma}
f(z_1,\ldots,z_r) = \lambda_1\sum_{i=1}^{r-1}z_i^2 
+ \lambda_2 z_r^2 
- 2\sum_{1\le i<j\le r}z_i z_j,
\end{equation}
where $\lambda_1, \lambda_2 > 0$ satisfy 
$\lambda_2 = (r-1)/(\lambda_1 - r + 2)$. Then the constrained 
minimum of $f$ on $\Lambda$ is achieved at
\begin{equation}\label{eq:critical_point_lemma}
z_1 = \cdots = z_{r-1} = \dfrac{k}{\lambda_1+1}, \quad 
z_r = \dfrac{k(\lambda_1 - r + 2)}{\lambda_1+1},
\end{equation}
and at this critical point $f(z_1,\ldots,z_r) = 0$.
\end{lemma}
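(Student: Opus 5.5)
The statement to prove is Lemma~\ref{lem:tripathi}. My plan is to handle the constrained optimization by Lagrange multipliers, and then to justify that the critical point is a global minimum by showing $f$ is convex on $\Lambda$.

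First I rewrite $f$ using $2\sum_{i<j}z_iz_j=(\sum z_i)^2-\sum z_i^2$. On $\Lambda$ this gives
\[
f=(\lambda_1+1)\sum_{i=1}^{r-1}z_i^2+(\lambda_2+1)z_r^2-k^2 .
\]
The problem then becomes minimizing a positive definite diagonal quadratic form on an affine hyperplane, because $\lambda_1+1>0$ and $\lambda_2+1>0$. A positive definite quadratic restricted to an affine subspace is strictly convex and coercive. Hence it has a unique minimizer, and that minimizer is its unique critical point.

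Next I apply Lagrange multipliers to $G=(\lambda_1+1)\sum_{i<r}z_i^2+(\lambda_2+1)z_r^2-\mu(\sum z_i-k)$. This gives $2(\lambda_1+1)z_i=\mu$ for $i<r$ and $2(\lambda_2+1)z_r=\mu$. So $z_1=\cdots=z_{r-1}=a$ and $z_r=a(\lambda_1+1)/(\lambda_2+1)$.

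I then use the hypothesis $\lambda_2=(r-1)/(\lambda_1-r+2)$. It gives $\lambda_2+1=(\lambda_1+1)/(\lambda_1-r+2)$, so $z_r=a(\lambda_1-r+2)$. The constraint becomes $(r-1)a+a(\lambda_1-r+2)=a(\lambda_1+1)=k$, hence $a=k/(\lambda_1+1)$. This is exactly \eqref{eq:critical_point_lemma}.

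Finally I evaluate $f$ at this point:
\[
(\lambda_1+1)(r-1)a^2+\frac{\lambda_1+1}{\lambda_1-r+2}(\lambda_1-r+2)^2a^2-k^2=(\lambda_1+1)^2a^2-k^2=0 .
\]

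The step needing care is the sign assumption $\lambda_1-r+2>0$. This is implicit in the lemma: it follows from $\lambda_2>0$ together with $r\ge2$. I will note this explicitly, since it is what makes $\lambda_2+1$ positive and $z_r$ well defined. Apart from that, the argument is a routine computation.
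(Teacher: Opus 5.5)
Your proof is correct, and it is essentially the standard argument from Tripathi, which the paper only cites without proof. That argument finds the critical point by Lagrange multipliers and certifies it as the global minimum by convexity on $\Lambda$; Tripathi checks that the Hessian is positive definite on $\{\sum X_i=0\}$, which is your identity $2\sum_{i<j}z_iz_j=(\sum z_i)^2-\sum z_i^2$ applied to tangent vectors. Reducing $f|_{\Lambda}$ first to $(\lambda_1+1)\sum_{i<r}z_i^2+(\lambda_2+1)z_r^2-k^2$ makes both the convexity and the value $f=0$ immediate; note that $\lambda_2+1>0$ already follows from $\lambda_2>0$, and you only use $\lambda_1-r+2\neq 0$, which is implicit in the definition of $\lambda_2$.
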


\begin{remark}\label{rem:tripathi_positivity}
The relation $\lambda_2 = (r-1)/(\lambda_1 - r + 2)$ requires 
$\lambda_1 > r - 2$ for the positivity condition $\lambda_2 > 0$ 
to hold. In the applications to Casorati inequalities throughout 
this paper, Lemma~\ref{lem:tripathi} is invoked with parameters 
$\lambda_1 = r$ and $\lambda_2 = (r-1)/2$ (see the proofs of Theorems~\ref{thm:complex_subm_vert} and~\ref{thm:complex_subm_horiz}). Since $r \geq 3$ by 
assumption, the positivity condition $\lambda_1 = r > r - 2$ 
is automatically satisfied. With $\lambda_1 = r$, the constraint 
$\lambda_2 = (r-1)/(\lambda_1 - r + 2)$ gives 
$\lambda_2 = (r-1)/2$ directly, confirming that 
the parameter choices are valid. The critical 
point~\eqref{eq:critical_point_lemma} then becomes
\[
z_1 = \cdots = z_{r-1} = \dfrac{k}{r+1}, \quad 
z_r = \dfrac{2k}{r+1},
\]
which will correspond to the quasi-umbilical ratio 
$h_{11}^\alpha = \cdots = h_{r-1,r-1}^\alpha 
= \dfrac{1}{2}\,h_{rr}^\alpha$ in the equality conditions 
of our main theorems.
\end{remark}

\begin{remark}\label{rem:rank_two}
The assumption $r \geq 3$ is necessary because for $r = 2$ 
the normalized Casorati curvatures $\delta_C(r-1) 
= \delta_C(1)$ vanish trivially, rendering the 
inequalities uninformative. Avoiding this degenerate 
case, we assume $r \geq 3$ throughout.
\end{remark}
\begin{remark}
In the companion article for Riemannian-map ~\cite{FatimaMaps}, a proper pointwise
slant structure forces $\operatorname{Ran}F_*$ to be even-dimensional, since
$P^{R}/\cos\theta$ is a complex structure on the range; the minimal admissible
rank is therefore $r=4$. No such parity constraint arises here:
$(P^{\calV})^2=-\cos^2\theta\,I$ holds only on the $\phi_1$-active part of the
distribution, which need not be the space over which the Casorati optimization
is performed. Hence $r\ge 3$ (and $s\ge 3$) is the only dimensional assumption,
as in~\cite{LeeSahin2014,Kumar2020}; Example~\ref{ex:subm1}, whose
$\phi_1$-active vertical part is three-dimensional, realizes a pointwise slant
submersion with odd active dimension.
\end{remark}

\section{Pointwise Slant Riemannian Submersions}\label{sec:pointwise_slant}

This section develops the fundamental properties of pointwise slant structures
for Riemannian submersions. Two settings arise, according to whether the total
space carries an almost complex or an almost contact metric structure.
\begin{enumerate}[label=(\roman*),start=1]
  \item Riemannian submersions $F:(M_1,g_1,J_1)\to(M_2,g_2)$ from an almost
        Hermitian manifold \cite{LeeSahin2014,Sahin2013};
  \item Riemannian submersions
        $F:(M_1,g_1,\phi_1,\xi_1,\eta_1)\to(M_2,g_2)$ from an almost contact
        metric manifold \cite{LeeSahin2014,Kumar2020,Prasad2021}.
\end{enumerate}
In each case, the structure tensor ($J_1$ or $\phi_1$) is applied to a vector
from a chosen distribution the vertical $\ker F_*$ or the horizontal
$(\ker F_*)^{\perp}$ and the result is split orthogonally into components
inside and outside that distribution. Throughout, $U$ denotes a vector field in
$\ker F_*$ (vertical) and $X$ a vector field in $(\ker F_*)^{\perp}$
(horizontal); $P$ denotes the component of the image {inside} the chosen
distribution and $Q$ the component in its orthogonal complement. Both settings
yield the same key norm identity, collected in
Proposition~\ref{prop:key_slant_identities}.

\subsection{ Submersion from an almost Hermitian manifold}
Let $F:(M_1,g_1,J_1)\to(M_2,g_2)$ be a Riemannian submersion from $(M_1,g_1,J_1)$ onto $(M_2,g_2)$.  Since $TM_1=\ker F_*\oplus
(\ker F_*)^\perp$, applying $J_1$ to a vertical vector $U$
gives a vector that splits into vertical and horizontal parts,
and applying $J_1$ to a horizontal vector $X$ likewise splits:

\medskip
For any vector field $U\in\Gamma(\ker F_*)$
\begin{equation}\label{eq:J_decomposition_vertical}
  J_1 U
  = P^{\mathcal{V}}U + Q^{\mathcal{V}}U,\qquad
  P^{\mathcal{V}}U\in\Gamma(\ker F_*),\quad
  Q^{\mathcal{V}}U\in\Gamma(\ker F_*)^{\perp},
\end{equation}
where the linear operators $P^{\mathcal{V}} : \ker F_*\;\to\; \ker F_*,$ and $Q^{\mathcal{V}} : \ker F_*\;\to\; (\ker F_*)^{\perp},$ i.e., $P^{\mathcal{V}}U$ is the part of $J_1 U$ that stays
vertical; $Q^{\mathcal{V}}U$ is the part that becomes
horizontal.

\medskip
Similarly, for $X\in\Gamma((\ker F_*)^{\perp})$
\begin{equation}\label{eq:J_decomposition_horizontal}
  J_1 X
  = P^{\mathcal{H}}X + Q^{\mathcal{H}}X,\qquad
  P^{\mathcal{H}}X\in\Gamma(\ker F_*)^{\perp},\quad
  Q^{\mathcal{H}}X\in\Gamma(\ker F_*),
\end{equation}
where the linear operators
 $ P^{\mathcal{H}} : (\ker F_*)^{\perp}
    \;\to\; (\ker F_*)^{\perp},$ and 
  $Q^{\mathcal{H}} : (\ker F_*)^{\perp}
    \;\to\; \ker F_*,$ i.e., 
$P^{\mathcal{H}}X$ is the part of $J_1 X$ that stays
horizontal; $Q^{\mathcal{H}}X$ is the part that becomes
vertical.

\begin{remark}
These are two genuinely different linear operators with
different domains.  $P^{\mathcal{V}}:\ker F_*\to\ker F_*$
measures how much $J_1$ preserves the vertical distribution,
while $P^{\mathcal{H}}:(\ker F_*)^\perp\to(\ker F_*)^\perp$
measures how much $J_1$ preserves the horizontal distribution.
\end{remark}

\begin{definition}[\cite{LeeSahin2014}]
\label{def:ps_subm_hermitian}
A Riemannian submersion $F:(M_1,g_1,J_1)\to(M_2,g_2)$ from an
almost Hermitian manifold $(M_1,g_1,J_1)$ is called a pointwise slant Riemannian submersion if there exists a function
$\theta:M_1\to[0,\pi/2]$ such that, at each fixed $p\in M_1$, the
angle $\theta(p)$ between $J_1U$ and $\mathcal{V}_p$ is independent
of the choice of nonzero $U\in\mathcal{V}_p$ --- but $\theta(p)$ may
vary from point to point. Equivalently,
\begin{equation*}
  \cos\theta(p) = \frac{\|P^{\mathcal{V}}U\|}{\|U\|}
\end{equation*}
for every nonzero $U\in\mathcal{V}_p$, i.e.\ ${(P^{\mathcal V})}^2=-\cos^2\theta(p)\,I$
with $\theta(p)$ referred to as a slant function.

\end{definition}

Since $J_1^2=-I$, for the linear operator $P^{\mathcal V}$ one has
\begin{equation}\label{eq:ids_subm_hermitian}
  (P^{\mathcal{V}})^2 = -\cos^2\!\theta\cdot I,\qquad
  g_1(P^{\mathcal{V}}U,\,P^{\mathcal{V}}U')
    = \cos^2\!\theta\cdot g_1(U,U'),\qquad
  g_1(Q^{\mathcal{V}}U,\,Q^{\mathcal{V}}U')
    = \sin^2\!\theta\cdot g_1(U,U'),
\end{equation}
for all $U,U'\in\Gamma(\ker F_*)$ and the analogous identities hold for the
linear operator $P^{\mathcal H}$. By Parseval's identity applied to an
orthonormal basis $\{e_1,\ldots,e_r\}$ of $\ker F_*$ these give, for the
{vertical} case,
\begin{equation}\label{eq:P_norm_slant_subm_hermitian}
  \|P^{\mathcal{V}}\|^2
  \;:=\;\sum_{i,j=1}^{r}
    \bigl(g_1(e_i,\,P^{\mathcal{V}}e_j)\bigr)^2
  \;=\; r\cos^2\theta,
  \qquad r=\dim\ker F_*,
\end{equation}
and analogously $\|P^{\mathcal{H}}\|^2=s\cos^2\theta$ with $s=\dim(\ker F_*)^\perp$.

\subsection{Submersion from an almost contact metric manifold}

Let $F:(M_1,g_1,\phi_1,\xi_1,\eta_1)\to(M_2,g_2)$ be a Riemannian submersion from an almost contact metric manifold $(M_1,g_1,\phi_1,\xi_1,\eta_1)$  onto a Riemannian manifold $(M_2,g_2)$. Applying $\phi_1$ to a vertical vector
$U$ and to a horizontal vector $X$ produces two separate
decompositions.\\

For any vector field $U\in\Gamma(\ker F_*)$
\begin{equation}\label{eq:phi1_decomp_V}
  \phi_1 U
  = P^{\mathcal{V}}U + Q^{\mathcal{V}}U,\qquad
  P^{\mathcal{V}}U\in\Gamma(\ker F_*),\quad
  Q^{\mathcal{V}}U\in\Gamma((\ker F_*)^{\perp}).
\end{equation}
$P^{\mathcal{V}}U$ is the part of $\phi_1 U$ that stays
vertical; $Q^{\mathcal{V}}U$ is the part that becomes
horizontal. Similarly, for any vector field $X\in\Gamma(\ker F_*)^{\perp}$
\begin{equation}\label{eq:phi1_decomp_H}
  \phi_1 X
  = P^{\mathcal{H}}X + Q^{\mathcal{H}}X,\qquad
  P^{\mathcal{H}}X\in\Gamma(\ker F_*)^{\perp},\quad
  Q^{\mathcal{H}}X\in\Gamma(\ker F_*).
\end{equation}
$P^{\mathcal{H}}X$ is the part of $\phi_1 X$ that stays
horizontal; $Q^{\mathcal{H}}X$ is the part that becomes
vertical. The above operators $P^{\mathcal{V}},~ Q^{\mathcal{V}},~P^{\mathcal{H}}, Q^{\mathcal{H}}$ are defined exactly as in subsection 3.1, with $\phi_1$ in place of
$J_1$.
\medskip

\begin{definition}\cite{Kumar2020,Prasad2021,LeeSahin2014}
Let $F:(M_1,g_1,\phi_1,\xi_1,\eta_1)\to(M_2,g_2)$ be a Riemannian
submersion from an almost contact metric manifold. Then $F$ is called
pointwise slant if
there exists $\theta:M_1\to[0,\pi/2]$ such that, at each $p\in M_1$,
the angle $\theta(p)$ between $\phi_1 U$ and $\mathcal{V}_p$ is
independent of the choice of nonzero
$U\in\ker F_{*p}\setminus\{\xi_1\}$. Equivalently,
$\cos\theta(p)=\dfrac{\|P^{\mathcal{V}}U\|}{\|U\|}$, i.e.\
$(P^{\mathcal{V}})^2=-\cos^2\theta\,I$ on $\ker F_*\setminus\{\xi_1\}$.

\end{definition}

The algebraic identities follow from $\phi_1^2=-I$ on $\ker\eta_1$
by the polarization argument of \cite[Lemma~2.2]{Park2015}. Write
$r=\dim\ker F_*$ and $s=\dim(\ker F_*)^{\perp}$ for the full vertical
and horizontal dimensions. Since $\phi_1$ annihilates $\xi_1$, the slant
condition is active on the orthogonal complement of $\xi_1$ in the
relevant distribution, so the effective dimension drops by one in
whichever distribution contains $\xi_1$. Parseval's identity then gives
\begin{equation}\label{eq:norm_identity_contact}
  \|P^{\mathcal{V}}\|^2 =
  \begin{cases}
    (r-1)\cos^2\theta, & \xi_1\in\Gamma(\ker F_*),\\[2pt]
    r\cos^2\theta, & \xi_1\in\Gamma((\ker F_*)^{\perp}),
  \end{cases}
  \qquad
  \|P^{\mathcal{H}}\|^2 =
  \begin{cases}
    (s-1)\cos^2\theta, & \xi_1\in\Gamma((\ker F_*)^{\perp}),\\[2pt]
    s\cos^2\theta, & \xi_1\in\Gamma(\ker F_*).
  \end{cases}
\end{equation}

\begin{proposition}\label{prop:key_slant_identities}
Let $\mathcal{D}_\theta$ denote the distribution on which the slant
condition is imposed --- the vertical or horizontal distribution in the
almost Hermitian case, and its intersection with $\ker\eta_1$ in the
almost contact case --- and set $d=\dim\mathcal{D}_\theta$. Then the
tangential projection operator $P$ satisfies
\begin{equation}\label{eq:norm_P}
  \|P\|^2 := \sum_{i,j}\bigl(g_1(e_i,Pe_j)\bigr)^2 = d\cos^2\theta
\end{equation}
for any orthonormal basis $\{e_i\}$ of $\mathcal{D}_\theta$. This is
\eqref{eq:P_norm_slant_subm_hermitian} in the almost Hermitian case
($d=r$ or $s$) and \eqref{eq:norm_identity_contact} in the almost
contact case, where $d=r-1$ or $r$ (resp.\ $s-1$ or $s$) according to the
position of $\xi_1$.
\end{proposition}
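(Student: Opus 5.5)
The plan is to compute $\|P\|^2$ directly from the slant relation $P^2=-\cos^2\theta\,I$ on $\mathcal{D}_\theta$. The first step is to show that $P$, restricted to $\mathcal{D}_\theta$ and composed with the orthogonal projection onto $\mathcal{D}_\theta$, is skew-adjoint. For $U,U'\in\mathcal{D}_\theta$, the inner product $g_1(PU,U')$ equals $g_1(J_1U,U')$, since the $Q$-part is orthogonal to $U'$. In the contact case we use $\phi_1$ instead. Both $J_1$ and $\phi_1$ are skew with respect to $g_1$: $J_1$ by the Hermitian condition, and $\phi_1$ by $g_1(\phi_1X,Y)=-g_1(X,\phi_1Y)$ for an almost contact metric structure. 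Hence $g_1(PU,U')=-g_1(U,PU')$.

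The second step combines skewness with the slant condition to get $g_1(PU,PU')=-g_1(P^2U,U')=\cos^2\theta\,g_1(U,U')$. This is the middle identity in \eqref{eq:ids_subm_hermitian}. In the contact case it holds on $\mathcal{D}_\theta\subset\ker\eta_1$. There $\phi_1^2=-I$, and the angle condition is imposed for all nonzero vectors, so polarization as in \cite[Lemma~2.2]{Park2015} upgrades $\|PU\|^2=\cos^2\theta\|U\|^2$ to the bilinear identity.

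One point needs checking here: that $P$ maps $\mathcal{D}_\theta$ into $\mathcal{D}_\theta$, so that $P^2$ makes sense. In the contact case, $\eta_1(\phi_1 U)=0$ and $\xi_1$ lies entirely in one distribution, so $P^{\mathcal V}U$ has no $\xi_1$-component and stays in $\mathcal{D}_\theta$. If this fails in some configuration, I would avoid $P^2$ and work only with the norm identity $\|PU\|=\cos\theta\|U\|$ from the definition. That identity is all Parseval needs.

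The final step is Parseval. Take an orthonormal basis $\{e_i\}_{i=1}^d$ of $\mathcal{D}_\theta$. Then
\[
\sum_{i,j}g_1(e_i,Pe_j)^2=\sum_j\|Pe_j\|^2=\sum_j\cos^2\theta=d\cos^2\theta .
\]
With $d=r$ or $s$ in the Hermitian case, and $d$ reduced by one in whichever distribution contains $\xi_1$, this gives \eqref{eq:P_norm_slant_subm_hermitian} and \eqref{eq:norm_identity_contact}.

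The main obstacle is the contact bookkeeping: making sure the orthonormal basis is taken in $\mathcal{D}_\theta$ and not in the full distribution. The remaining basis vector $\xi_1$, if it lies in that distribution, contributes $\|P\xi_1\|^2=0$, so including it would not change the sum anyway.
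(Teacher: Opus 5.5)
Your proof is correct and is essentially the paper's argument: the slant condition is upgraded by skewness/polarization to $g_1(PU,PU')=\cos^2\theta\,g_1(U,U')$, then Parseval is applied over an orthonormal basis of $\mathcal{D}_\theta$, with the dimension dropping by one where $\xi_1$ lies; you also usefully make explicit two facts the paper leaves tacit, the skew-adjointness of $P$ and the $P$-invariance of $\mathcal{D}_\theta$ (needed for Parseval to be an equality). One caution on your closing identification: the horizontal formulas need the slant condition imposed on $\calH$ itself, as the hypothesis on $\mathcal{D}_\theta$ states, because vertical slantness alone gives $\calH=Q^{\calV}\calV\oplus\mu$ with $J_1\mu=\mu$ and hence $\|P^{\calH}\|^2=r\cos^2\theta+(s-r)$, not $s\cos^2\theta$.
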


\begin{remark}\label{rem:special_cases}
Proposition~\ref{prop:key_slant_identities} encompasses the
following classical special cases
\cite{Chen1990,Sahin2010a,LeeSahin2014}
\begin{enumerate}[label={(\alph*)}]
  \item \textbf{Invariant case ($\theta=0$):}\;
        $\cos\theta=1$, so $P=J$ (or $P=\phi$ on $\ker\eta_1$)
        and $Q=0$; the structure map preserves the chosen
        distribution entirely, $\|P\|^2=r$.
  \item \textbf{Anti-invariant case ($\theta=\pi/2$):}\;
        $\cos\theta=0$, so $P=0$ and $Q=J$ (or $Q=\phi$); the
        structure map sends every vector entirely out of the
        chosen distribution, $\|P\|^2=0$.
  \item \textbf{Proper slant ($0<\theta<\pi/2$):}\; both $P$
        and $Q$ are nontrivial and
        $\|P\|^2=r\cos^2\theta\in(0,r)$ interpolates
        continuously between the two extremes.
\end{enumerate}
The Casorati inequalities in Sections~4 and~5 therefore recover
all previously known results for the invariant and
anti-invariant cases simultaneously through the single
parameter $\cos^2\theta$.
\end{remark}

With the fundamental properties of pointwise slant Riemannian submersions now
established, we proceed to derive our main Casorati inequalities. The central
role in every proof is played by the norm
identities~\eqref{eq:P_norm_slant_subm_hermitian} and
\eqref{eq:norm_identity_contact}, which express $\|P\|^2$ purely in terms of
$\cos^2\theta$ and thereby connect the extrinsic Casorati curvature to the slant
angle.

\section{Casorati Inequalities for the Vertical Distribution}\label{sec:vertical}

Riemannian submersions possess two natural distributions, the vertical
$\calV=\ker F_*$ and the horizontal $\calH=(\ker F_*)^{\perp}$, governed by the
O'Neill tensors $T$ and $A$, respectively. In this section we treat the vertical
distribution, whose extrinsic geometry is encoded by $T$. Throughout we assume
$r=\dim\calV\geq 3$.

Let 
$\{U_1,\ldots,U_r\}$ be an orthonormal basis of $\calV$ 
at $p \in M_1$ and let $\{U_{r+1},\ldots,U_{m_1}\}$ be 
an orthonormal basis of $\calH$ at $p$. The components of 
O'Neill's tensor $T$ are
\begin{equation}\label{eq:T_components}
T_{ij}^{\alpha} = g_1(T_{U_i}U_j,\, U_\alpha), \quad 
i,j = 1,\ldots,r,\; \alpha = r+1,\ldots,m_1,
\end{equation}
and the squared norm of $T$ restricted to $\calV$ is
\begin{equation}\label{eq:T_norm}
\|T\|^2 = \sum_{\alpha=r+1}^{m_1}\sum_{i,j=1}^r 
           (T_{ij}^{\alpha})^2.
\end{equation}
The {Casorati curvature of the vertical distribution} 
and its normalized variants are
\begin{equation}\label{eq:casorati_vertical}
C^{\calV} = \dfrac{1}{r}\|T\|^2, \qquad
C(L) = \dfrac{1}{k}\sum_{\alpha=r+1}^{m_1}
        \sum_{i,j=1}^k (T_{ij}^{\alpha})^2
\end{equation}
for a $k$-dimensional subspace $L \subset \calV$ 
$(k \ge 2)$ with orthonormal basis $\{U_1,\ldots,U_k\}$,
\begin{align}
\delta_C^{\calV}(r-1) 
&= \dfrac{1}{2}C^{\calV} 
 + \dfrac{r+1}{2r}
   \inf\{C(L) : L \text{ is a hyperplane of } \calV\},
\label{eq:delta_casorati_vert}\\[4pt]
\hat{\delta}_C^{\calV}(r-1) 
&= 2C^{\calV} 
 - \dfrac{2r-1}{2r}
   \sup\{C(L) : L \text{ is a hyperplane of } \calV\}.
\label{eq:delta_hat_casorati_vert}
\end{align}
The fiber scalar curvature $\tau^{\calV}$, computed from 
the curvature tensor $\hat{R}$ of the fibers, and the 
ambient scalar curvature $\tau_{M_1}^{\calV}$, computed 
from $R^{M_1}$, have normalized forms
\begin{equation}\label{eq:normalized_scalar_vert}
\rho^{\calV} = \dfrac{2\tau^{\calV}}{r(r-1)}, \qquad
\rho_{M_1}^{\calV} 
= \dfrac{2\tau_{M_1}^{\calV}}{r(r-1)}.
\end{equation}
The equality condition for the case of vertical distribution is 
characterized by invariant quasi-umbilicity of the fibers: 
there exist orthonormal bases such that 
$T_{ij}^\alpha = 0$ for $i \neq j$ and 
$T_{11}^\alpha = \cdots = T_{r-1,r-1}^\alpha 
= \dfrac{1}{2}T_{rr}^\alpha$ for all $\alpha$, in complete 
analogy with Definition~\ref{def:quasi-umbilical}.\\

The equality cases of our vertical inequalities are characterized by the
following notion, which generalizes classical quasi-umbilical submanifolds to
the fibers of a submersion.

\begin{definition}\label{def:quasi-umbilical}
The fibers of a Riemannian submersion $F:(M_1,g_1)\to(M_2,g_2)$ are called
{invariantly quasi-umbilical} at $p\in M_1$ if there exist orthonormal
bases $\{U_1,\ldots,U_r\}$ of $\calV=\ker F_{*p}$ and
$\{U_{r+1},\ldots,U_{m_1}\}$ of $\calH=(\ker F_{*p})^{\perp}$ such that the
components $T_{ij}^{\alpha}=g_1(T_{U_i}U_j,U_\alpha)$ of O'Neill's tensor
satisfy
\begin{enumerate}[label=(\roman*)]
\item $T_{ij}^\alpha = 0$ for all $i \neq j$ and all
      $\alpha = r+1, \ldots, m_1$;
\item $T_{11}^\alpha = T_{22}^\alpha = \cdots =
      T_{r-1,r-1}^\alpha$ for all $\alpha = r+1, \ldots, m_1$.
\end{enumerate}
\end{definition}

\begin{remark}\label{rem:quasi-umbilical}
Condition (i) ensures that $T$ is diagonal in suitable bases, while condition
(ii) requires the first $r-1$ principal components to be equal. The equality
case in Theorem~\ref{thm:complex_subm_vert} below corresponds to the sharper
requirement
\[
T_{11}^\alpha = T_{22}^\alpha = \cdots =
T_{r-1,r-1}^\alpha = \dfrac{1}{2}\,T_{rr}^\alpha, \quad
T_{ij}^\alpha = 0 \text{ for } i \neq j,
\]
for all $\alpha = r+1, \ldots, m_1$.
\end{remark}

\subsection{Generalized complex space forms}

Our first result for submersions gives an explicit bound when the 
total space is a generalized complex space form.
\begin{theorem}\label{thm:complex_subm_vert}
Let $F:(M_1^{m_1}(c_1,c_2),g_1,J_1)\to (M_2^{m_2},g_2)$ be a pointwise slant Riemannian submersion from a generalized complex space form onto a Riemannian manifold with vertical space of dimension $r\ge 3$ and slant function $\theta$. Then
\begin{equation}\label{eq:complex_subm_vert_delta}
\rho^{\calV} \le \delta_C^{\calV}(r-1) + c_1 + \dfrac{3c_2}{r(r-1)}\|P^{\mathcal V}\|^2 = \delta_C^{\calV}(r-1) + c_1 + \dfrac{3c_2}{r-1}\cos^2\theta,
\end{equation}
and
\begin{equation}\label{eq:complex_subm_vert_delta_hat}
\rho^{\calV} \le \hat{\delta}_C^{\calV}(r-1) + c_1 + \dfrac{3c_2}{r(r-1)}\|P^{\mathcal V}\|^2 = \hat{\delta}_C^{\calV}(r-1) + c_1 + \dfrac{3c_2}{r-1}\cos^2\theta.
\end{equation}
Equality holds at $p \in M_1$ if and only if for suitable orthonormal 
bases, the components of the $T$-tensor satisfy
\[
T_{11}^{\alpha} = T_{22}^{\alpha} = \cdots = T_{r-1,r-1}^{\alpha} 
= \dfrac{1}{2}T_{rr}^{\alpha}, \quad T_{ij}^{\alpha} = 0 \text{ for } i\neq j,
\]
for all $\alpha = r+1,\ldots,m_1$. Geometrically, this means that the 
fibers of $F$ are invariantly quasi-umbilical in the sense of 
Definition~\ref{def:quasi-umbilical}. 
\end{theorem}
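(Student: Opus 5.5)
The plan is the standard Decu--Haesen--Verstraelen/Tripathi scheme, carried out inside the fibers with O'Neill's $T$ playing the role of the second fundamental form. Fix $p\in M_1$, an orthonormal basis $\{U_1,\dots,U_r\}$ of $\calV_p$ and $\{U_{r+1},\dots,U_{m_1}\}$ of $\calH_p$. Sum the Gauss-type equation of Theorem~\ref{thm:gauss_vertical} over $1\le i\ne j\le r$ with $W=U_j$, $Z=U_i$. This gives
\[
2\tau^{\calV}=2\tau^{\calV}_{M_1}+\sum_{\alpha}\Bigl(\sum_{i}T^{\alpha}_{ii}\Bigr)^2-\|T\|^2 ,
\]
where I will keep track of sign conventions so that the result matches the submanifold Gauss equation with $T$ in place of $h$.

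For the ambient term I insert the curvature of $M_1(c_1,c_2)$ from Definition~\ref{def:gen_complex_space_form}. Since $g_1(U_i,J_1U_j)=g_1(U_i,P^{\calV}U_j)$, summing gives
\[
2\tau^{\calV}_{M_1}=c_1r(r-1)+3c_2\|P^{\calV}\|^2 .
\]
Proposition~\ref{prop:key_slant_identities}, via \eqref{eq:P_norm_slant_subm_hermitian}, then replaces $\|P^{\calV}\|^2$ by $r\cos^2\theta$. Writing $r^2\|H\|^2:=\sum_\alpha(\sum_iT^\alpha_{ii})^2$ and $\|T\|^2=rC^{\calV}$, we obtain
\[
r(r-1)\rho^{\calV}=c_1r(r-1)+3c_2r\cos^2\theta+r^2\|H\|^2-rC^{\calV}.
\]

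Next I introduce the polynomial
\[
\mathcal P=\tfrac12r(r-1)C^{\calV}+\tfrac12(r^2-1)C(L)-r(r-1)\rho^{\calV}+c_1r(r-1)+3c_2r\cos^2\theta,
\]
where $L=\Span\{U_1,\dots,U_{r-1}\}$ is a hyperplane. Substituting the identity above and expanding $C^{\calV}$, $C(L)$ and $\|H\|^2$ in components, $\mathcal P$ splits, for each $\alpha$, into nonnegative multiples of $(T^\alpha_{ij})^2$ for $i\ne j$, plus the quadratic form
\[
f_\alpha=r\sum_{i<r}(T^\alpha_{ii})^2+\tfrac{r-1}{2}(T^\alpha_{rr})^2-2\sum_{i<j}T^\alpha_{ii}T^\alpha_{jj},
\]
up to a positive factor. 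This is exactly Lemma~\ref{lem:tripathi} with $\lambda_1=r$ and $\lambda_2=(r-1)/2$, as in Remark~\ref{rem:tripathi_positivity}. On the hyperplane $\sum_iT^\alpha_{ii}=k^\alpha$ its minimum is $0$, attained at $T^\alpha_{11}=\dots=T^\alpha_{r-1,r-1}=\tfrac12T^\alpha_{rr}$. Hence $\mathcal P\ge0$, and taking the infimum over hyperplanes $L$ yields \eqref{eq:complex_subm_vert_delta}.

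The main obstacle is the bookkeeping in this expansion: checking that the coefficients come out exactly as $\lambda_1=r$ and $\lambda_2=(r-1)/2$ after multiplying by the right normalization, and that the off-diagonal coefficients are nonnegative. I expect that multiplying $\mathcal P$ by $2/(r-1)$ or a similar scaling will be required.

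For \eqref{eq:complex_subm_vert_delta_hat} I will use the companion polynomial
\[
\hat{\mathcal P}=2r(r-1)C^{\calV}-\tfrac12(2r-1)(r-1)C(L)-r(r-1)\rho^{\calV}+c_1r(r-1)+3c_2r\cos^2\theta .
\]
It reduces to the same Tripathi form, possibly with different off-diagonal coefficients, and taking the supremum over $L$ gives the bound.

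For the equality case, equality forces every off-diagonal $T^\alpha_{ij}$ to vanish, since their coefficients are positive. It also forces each $(T^\alpha_{ii})_i$ to sit at the critical point of the lemma, which is precisely the condition stated in the theorem and Remark~\ref{rem:quasi-umbilical}. Conversely, under those relations a direct substitution gives $\mathcal P=0$. One must also note that the hyperplane realizing the infimum or supremum is spanned by $U_1,\dots,U_{r-1}$.
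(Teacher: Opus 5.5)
Your argument for \eqref{eq:complex_subm_vert_delta} is correct. It is exactly what the paper's one-line appeal to Lemma~\ref{lem:tripathi} leaves implicit, and no rescaling of $\mathcal P$ is needed. Use the Gauss equation in the form $2\tau^{\calV}=2\tau^{\calV}_{M_1}+\sum_\alpha(\sum_iT^\alpha_{ii})^2-\|T\|^2$. Your caution about signs is warranted: Theorem~\ref{thm:gauss_vertical} as printed, read with the curvature convention of \eqref{eq:gen_complex_curvature}, gives the opposite sign. With the correct sign, for each $\alpha$ your $\mathcal P$ splits as follows:
\begin{itemize}
\item the diagonal part is exactly $r\sum_{i<r}(T^\alpha_{ii})^2+\frac{r-1}{2}(T^\alpha_{rr})^2-2\sum_{i<j}T^\alpha_{ii}T^\alpha_{jj}$;
\item the off-diagonal part is $2(r+1)\sum_{i<j<r}(T^\alpha_{ij})^2+(r+1)\sum_{i<r}(T^\alpha_{ir})^2$.
\end{itemize}

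The genuine gap is in the $\hat\delta$ part. $\hat{\mathcal P}$ does \emph{not} reduce to the same Tripathi form; it is the diagonal coefficients that change, not just the off-diagonal ones. Per $\alpha$ it equals $(2r-1)\sum_{i,j\le r}(T^\alpha_{ij})^2-\frac{2r-1}{2}\sum_{i,j<r}(T^\alpha_{ij})^2-(\sum_iT^\alpha_{ii})^2$. Its diagonal part has $\lambda_1=r-\frac32$ and $\lambda_2=2(r-1)$. Its off-diagonal part is $(2r-1)\sum_{i<j<r}(T^\alpha_{ij})^2+2(2r-1)\sum_{i<r}(T^\alpha_{ir})^2\ge 0$.

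Since $\lambda_1-r+2=\frac12$, Tripathi's relation holds, and \eqref{eq:complex_subm_vert_delta_hat} does follow after taking the supremum over $L$. However, the critical point is now $z_1=\dots=z_{r-1}=\frac{2k}{2r-1}$, $z_r=\frac{k}{2r-1}$. So equality in \eqref{eq:complex_subm_vert_delta_hat} holds if and only if $T^\alpha_{ij}=0$ for $i\ne j$ and $T^\alpha_{11}=\dots=T^\alpha_{r-1,r-1}=2T^\alpha_{rr}$, not $\frac12T^\alpha_{rr}$.

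Your step ``which is precisely the condition stated in the theorem'' therefore cannot be carried out for the second inequality, because the statement is false there. Take $T^{\alpha_0}=\operatorname{diag}(a,\dots,a,2a)$ with $a\ne0$ and $T^\alpha=0$ for $\alpha\ne\alpha_0$. For $r\ge3$, comparing eigenvalue multiplicities shows no orthonormal basis brings $T^{\alpha_0}$ to the shape $(2b,\dots,2b,b)$. Hence $\hat{\mathcal P}(L)>0$ for every hyperplane $L$, including the one realizing the supremum, and \eqref{eq:complex_subm_vert_delta_hat} is strict.

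The paper's proof has the same conflation, since it only ever invokes $\lambda_1=r$, $\lambda_2=(r-1)/2$. You should state and prove the two equality cases separately.
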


\begin{proof}
Using the curvature formula \eqref{eq:gen_complex_curvature} for $M_1(c_1,c_2)$ and an orthonormal basis $\{U_1,\ldots,U_r\}$ of $\ker F_*$, we compute
\begin{align*}
2\tau_{M_1}^{\calV} &= \sum_{i,j=1}^r g_1(R^{M_1}(U_i,U_j)U_j,U_i)\\
&= \sum_{i,j=1}^r c_1\{g_1(U_j,U_j)g_1(U_i,U_i) - g_1(U_i,U_j)g_1(U_j,U_i)\}\\
&\quad + \sum_{i,j=1}^r c_2\{g_1(U_i, J_1U_j)g_1(J_1U_j, U_i) - g_1(U_j, J_1U_j)g_1(J_1U_i, U_i)\\
&\quad + 2g_1(U_i, J_1U_j)g_1(J_1U_j, U_i)\}.
\end{align*}

The first sum gives $r(r-1)c_1$ by orthonormality. For the second sum, we use the decomposition \eqref{eq:J_decomposition_vertical} to write $J_1U_j = P^{\mathcal V}U_j + Q^{\mathcal V}U_j$ where $P^{\mathcal V}U_j \in \ker F_*$ and $Q^{\mathcal V}U_j \in (\ker F_*)^\perp$. Since $J_1$ is an almost complex structure, $g_1(U_j, J_1U_j) = 0$, and we obtain
\[\sum_{i,j=1}^r c_2\{g_1(U_i, P^{\mathcal V}U_j)g_1(P^{\mathcal V}U_j, U_i) + 2g_1(U_i, P^{\mathcal V}U_j)g_1(P^{\mathcal V}U_j, U_i)\} = 3c_2\|P^{\mathcal V}\|^2,\]
where $\|P^{\mathcal V}\|^2 = \sum_{i,j=1}^r (g_1(U_i, P^{\mathcal V}U_j))^2$ as in equation \eqref{eq:P_norm_slant_subm_hermitian}.

Therefore, $2\tau_{M_1}^{\calV} = r(r-1)c_1 + 3c_2\|P^{\mathcal V}\|^2$, which gives
\[\rho_{M_1}^{\calV} = c_1 + \dfrac{3c_2}{r(r-1)}\|P^{\mathcal V}\|^2.\]

Applying the algebraic optimization of Lemma~\ref{lem:tripathi}
to the vertical Gauss equation~\eqref{eq:gauss_vertical}, we obtain
\[
\rho^{\calV} \leq \delta_C^{\calV}(r-1) + \rho_{M_1}^{\calV}.
\]
Substituting $\rho_{M_1}^{\calV}$ and using 
$\|P^{\calV}\|^2 = r\cos^2\theta$ from 
equation~\eqref{eq:P_norm_slant_subm_hermitian}, 
we obtain the desired inequalities 
\eqref{eq:complex_subm_vert_delta} 
and~\eqref{eq:complex_subm_vert_delta_hat}.

\medskip
Equality holds if and only if the components of the $T$-tensor satisfy the condition stated in the theorem, which follows from Lemma~\ref{lem:tripathi} with parameters $\lambda_1 = r$ and $\lambda_2 = \dfrac{r-1}{2}$.
\end{proof}

\begin{corollary}\label{cor:subm_vert_real}
Let $F:(M_1^{m_1},g_1)\to (M_2^{m_2},g_2)$ be a pointwise slant Riemannian submersion with vertical space of dimension $r\ge 3$ and slant function $\theta$.
\begin{enumerate}
    \item If $M_1(c)$ is a real space form, then
    \begin{equation}\label{eq:subm_vert_real}
    \rho^{\calV} \le \delta_C^{\calV}(r-1) + c, \quad \rho^{\calV} \le \hat{\delta}_C^{\calV}(r-1) + c.
    \end{equation}
    
    \item If $M_1(c)$ is a complex space form, then
    \begin{align}
    \rho^{\calV} &\le \delta_C^{\calV}(r-1) + \dfrac{c}{4}\left(1 + \dfrac{3}{r-1}\cos^2\theta\right), \label{eq:subm_vert_complex}\\
    \rho^{\calV} &\le \hat{\delta}_C^{\calV}(r-1) + \dfrac{c}{4}\left(1 + \dfrac{3}{r-1}\cos^2\theta\right).
    \end{align}
    
    \item If $M_1(c)$ is a real K\"{a}hler space form, then
    \begin{align}
    \rho^{\calV} &\le \delta_C^{\calV}(r-1) + \dfrac{c+3\alpha}{4} + \dfrac{3(c-\alpha)}{4(r-1)}\cos^2\theta, \label{eq:subm_vert_kahler}\\
    \rho^{\calV} &\le \hat{\delta}_C^{\calV}(r-1) + \dfrac{c+3\alpha}{4} + \dfrac{3(c-\alpha)}{4(r-1)}\cos^2\theta.
    \end{align}
\end{enumerate}
\end{corollary}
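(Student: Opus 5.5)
The corollary follows by specializing Theorem~\ref{thm:complex_subm_vert} to particular values of $(c_1,c_2)$. Inequalities \eqref{eq:complex_subm_vert_delta} and \eqref{eq:complex_subm_vert_delta_hat} hold for every generalized complex space form $M_1(c_1,c_2)$. The curvature term on the right is $c_1+\frac{3c_2}{r-1}\cos^2\theta$. So the only work is to substitute the values of $(c_1,c_2)$ from the list following Definition~\ref{def:gen_complex_space_form} and simplify.

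For item (1), a real space form has curvature tensor \eqref{eq:gen_complex_curvature} with $c_1=c$ and $c_2=0$. The $J_1$-terms then drop out. One point needs care: the corollary does not insist that $M_1$ carry an almost Hermitian structure. I would rederive the identity $2\tau^{\calV}_{M_1}=r(r-1)c$ directly from the constant-curvature formula. This is just the first sum in the proof of Theorem~\ref{thm:complex_subm_vert}, and it needs no $J_1$. The slant function then plays no role, and the Tripathi optimization step (Lemma~\ref{lem:tripathi} applied to \eqref{eq:gauss_vertical}) gives $\rho^{\calV}\le \delta_C^{\calV}(r-1)+c$, and likewise with $\hat\delta_C^{\calV}(r-1)$.

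For item (2), I would put $c_1=c_2=c/4$. This gives
\[
c_1+\frac{3c_2}{r-1}\cos^2\theta=\frac{c}{4}\Bigl(1+\frac{3}{r-1}\cos^2\theta\Bigr).
\]
For item (3), I would put $c_1=(c+3\alpha)/4$ and $c_2=(c-\alpha)/4$, which gives $\frac{c+3\alpha}{4}+\frac{3(c-\alpha)}{4(r-1)}\cos^2\theta$. Both the $\delta_C^{\calV}$ and $\hat\delta_C^{\calV}$ versions follow from the corresponding inequalities in Theorem~\ref{thm:complex_subm_vert}. In each case the equality characterization, invariantly quasi-umbilical fibers with $T_{11}^\alpha=\cdots=T_{r-1,r-1}^\alpha=\frac12T_{rr}^\alpha$, carries over unchanged. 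This is because it depends only on the Tripathi step and not on $(c_1,c_2)$.

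There is essentially no obstacle here. The one point I would check is item (1), where ``pointwise slant'' is vacuous without a complex structure. I would handle it as described above: observe that the argument of Theorem~\ref{thm:complex_subm_vert} with $c_2=0$ never uses $J_1$ or $\theta$.
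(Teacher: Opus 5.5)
Your proposal is correct and follows the paper's route: the paper proves the corollary by substituting $(c_1,c_2)=(c,0)$, $(c/4,c/4)$ and $\bigl((c+3\alpha)/4,(c-\alpha)/4\bigr)$ into Theorem~\ref{thm:complex_subm_vert}, and your simplifications agree with its bounds. Your extra remark on item (1) is also valid. Since $2\tau^{\calV}_{M_1}=r(r-1)c$ and the Tripathi step never use $J_1$ or $\theta$, the real-space-form bound holds for any Riemannian submersion, with or without an almost Hermitian structure.
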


The proofs are immediate by substitution.\\ 

\subsection{Generalized Sasakian space forms}

When the total space is a generalized Sasakian space form, we must again account for the Reeb vector field, leading to two cases depending on whether $\xi_1 \in \ker F_*$ or $\xi_1 \in (\ker F_*)^\perp$.

\begin{theorem}\label{thm:sasakian_subm_vert}
Let $F:(M_1^{m_1}(c_1,c_2,c_3),\phi_1,\xi_1,\eta_1,g_1)\to (M_2^{m_2},g_2)$ be a pointwise slant Riemannian submersion from a generalized Sasakian space form onto a Riemannian manifold with vertical space of dimension $r\ge 3$ and slant function $\theta$. Then
\begin{equation}\label{eq:sasakian_subm_vert}
\rho^{\calV} \le \begin{cases}
\delta_C^{\calV}(r-1) + c_1 + \dfrac{3c_2}{r}\cos^2\theta - \dfrac{2}{r}c_3, & \text{if } \xi_1 \in \Gamma(\ker F_*),\\[6pt]
\delta_C^{\calV}(r-1) + c_1 + \dfrac{3c_2}{r-1}\cos^2\theta, & \text{if } \xi_1 \in \Gamma(\ker F_*)^\perp,
\end{cases}
\end{equation}
and similarly for $\hat{\delta}_C^{\calV}$. Equality holds if and only if the fibers are invariantly quasi-umbilical 
 which is the same as in 
Theorem~\ref{thm:complex_subm_vert}.
\end{theorem}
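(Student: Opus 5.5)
The plan is to follow the proof of Theorem~\ref{thm:complex_subm_vert} verbatim. Only the computation of the ambient term $\rho^{\calV}_{M_1}$ changes; the Casorati optimization is independent of the ambient structure. Fix $p\in M_1$ and choose an orthonormal basis $\{U_1,\ldots,U_r\}$ of $\ker F_{*p}$ adapted to the position of the Reeb field. If $\xi_1\in\Gamma(\ker F_*)$ take $U_r=\xi_1$, so that $\{U_1,\ldots,U_{r-1}\}$ spans $\mathcal D_\theta=\ker F_*\cap\ker\eta_1$. If $\xi_1\in\Gamma(\ker F_*)^\perp$, then $\eta_1(U_i)=0$ for all $i$.

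\textbf{Ambient scalar curvature.} Substitute \eqref{eq:gen_sasakian_curvature} into $2\tau^{\calV}_{M_1}=\sum_{i,j}g_1(R^{M_1}(U_i,U_j)U_j,U_i)$ and treat the three terms separately.
The $c_1$-term gives $r(r-1)c_1$ exactly as before.
For the $c_2$-term, use $\phi_1U_j=P^{\calV}U_j+Q^{\calV}U_j$ from \eqref{eq:phi1_decomp_V} and $g_1(U_j,\phi_1U_j)=0$. This gives $3c_2\|P^{\calV}\|^2$, where by \eqref{eq:norm_identity_contact} and Proposition~\ref{prop:key_slant_identities} we have $\|P^{\calV}\|^2=(r-1)\cos^2\theta$ or $r\cos^2\theta$ according to the position of $\xi_1$.
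For the $c_3$-term, a direct contraction gives
\[
\sum_{i,j}\bigl\{2\delta_{ij}\eta_1(U_i)\eta_1(U_j)-\eta_1(U_j)^2-\eta_1(U_i)^2\bigr\}c_3=-2(r-1)c_3\sum_i\eta_1(U_i)^2 .
\]
This equals $-2(r-1)c_3$ when $\xi_1$ is vertical and $0$ when $\xi_1$ is horizontal. Dividing by $r(r-1)$ yields
\[
\rho^{\calV}_{M_1}=c_1+\frac{3c_2}{r}\cos^2\theta-\frac{2c_3}{r}
\quad\text{or}\quad
\rho^{\calV}_{M_1}=c_1+\frac{3c_2}{r-1}\cos^2\theta ,
\]
which are precisely the two constants in \eqref{eq:sasakian_subm_vert}.

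\textbf{Casorati step.} Contract the vertical Gauss equation \eqref{eq:gauss_vertical} to get
\[
2\tau^{\calV}=2\tau^{\calV}_{M_1}+\sum_\alpha\Bigl(\sum_i T^\alpha_{ii}\Bigr)^2-\|T\|^2 .
\]
Then form the quadratic polynomial $\mathcal P=\tfrac12 r(r-1)C^{\calV}+\tfrac12(r+1)(r-1)C(L)-2\tau^{\calV}+2\tau^{\calV}_{M_1}$, taking $L=\mathrm{span}\{U_1,\ldots,U_{r-1}\}$ as a hyperplane. For each $\alpha$, $\mathcal P$ splits into off-diagonal squares (which are nonnegative) plus a form in $z_i=T^\alpha_{ii}$. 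Up to a positive factor, this form has exactly the shape of $f$ in Lemma~\ref{lem:tripathi} with $\lambda_1=r$ and $\lambda_2=(r-1)/2$, restricted to the hyperplane $\sum z_i=k^\alpha$. The lemma then gives $\mathcal P\ge0$. Taking the infimum over hyperplanes $L$ produces $\rho^{\calV}\le\delta^{\calV}_C(r-1)+\rho^{\calV}_{M_1}$. The $\hat\delta^{\calV}_C$ version follows in the same way using the coefficient $-(2r-1)$ and the supremum.

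\textbf{Equality and main obstacle.} Equality forces $T^\alpha_{ij}=0$ for $i\ne j$ and places the $z_i$ at the critical point of Remark~\ref{rem:tripathi_positivity}. This is the invariantly quasi-umbilical condition, as in Theorem~\ref{thm:complex_subm_vert}. The only genuinely new point, and the step needing care, is the $c_3$ contraction together with the correct effective dimension of $\mathcal D_\theta$. Dropping the index $r$ when $\xi_1=U_r$ is what turns the coefficient $3c_2/(r-1)$ into $3c_2/r$. In particular, one must check that $g_1(U_i,P^{\calV}\xi_1)=0$, which holds since $\phi_1\xi_1=0$.
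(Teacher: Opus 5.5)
Your proposal is correct and follows the paper's proof. You compute $\rho^{\calV}_{M_1}$ separately for $\xi_1$ vertical and horizontal, and your contraction $-2(r-1)c_3\sum_i\eta_1(U_i)^2$, together with $\|P^{\calV}\|^2=(r-1)\cos^2\theta$ or $r\cos^2\theta$, gives exactly the paper's constants (which the paper only states). You then substitute into $\rho^{\calV}\le\delta^{\calV}_C(r-1)+\rho^{\calV}_{M_1}$ from Theorem~\ref{thm:complex_subm_vert}, and you write out its Casorati-polynomial argument more fully than the paper does.

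Two minor cautions, both inherited from the paper:
\begin{enumerate}
\item Your identity $2\tau^{\calV}=2\tau^{\calV}_{M_1}+\sum_\alpha\bigl(\sum_iT^\alpha_{ii}\bigr)^2-\|T\|^2$ is the correct one for the curvature sign in \eqref{eq:gen_sasakian_curvature}. However, contracting \eqref{eq:gauss_vertical} literally as printed gives the opposite sign.
\item For $\hat{\delta}^{\calV}_C$ the lemma is applied with $\lambda_1=r-\tfrac32$ and $\lambda_2=2(r-1)$. Equality there therefore means $T^\alpha_{11}=\cdots=T^\alpha_{r-1,r-1}=2T^\alpha_{rr}$, not the critical point of Remark~\ref{rem:tripathi_positivity}.
\end{enumerate}
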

\begin{proof}
The computation follows Theorem~\ref{thm:complex_subm_vert} for the
$c_1,c_2$ terms, with the additional $c_3$ (Reeb) term
in~\eqref{eq:gen_sasakian_curvature} contributing according to whether
$\xi_1$ is vertical or horizontal. By~\eqref{eq:norm_identity_contact},
$\|P^{\calV}\|^2=(r-1)\cos^2\theta$ when $\xi_1\in\Gamma(\ker F_*)$ and
$\|P^{\calV}\|^2=r\cos^2\theta$ when $\xi_1\in\Gamma((\ker F_*)^{\perp})$.
Hence
\[
\rho_{M_1}^{\calV}
= c_1+\frac{3c_2}{r(r-1)}\,(r-1)\cos^2\theta-\frac{2c_3}{r}
= c_1+\frac{3c_2}{r}\cos^2\theta-\frac{2c_3}{r},
\qquad \xi_1\in\Gamma(\ker F_*),
\]
\[
\rho_{M_1}^{\calV}
= c_1+\frac{3c_2}{r(r-1)}\,r\cos^2\theta
= c_1+\frac{3c_2}{r-1}\cos^2\theta,
\qquad \xi_1\in\Gamma((\ker F_*)^{\perp}).
\]
Substituting into $\rho^{\calV}\leq\delta_C^{\calV}(r-1)
+\rho_{M_1}^{\calV}$ obtains the result.
\end{proof}

As in the complex case, specializing
Theorem~\ref{thm:sasakian_subm_vert} to the standard contact space forms
gives the following explicit inequalities.

\begin{corollary}\label{cor:subm_vert_sasakian}
Let $F:(M_1^{m_1},g_1)\to (M_2^{m_2},g_2)$ be a pointwise 
slant Riemannian submersion with vertical space of dimension 
$r\ge 3$ and slant function $\theta$.

\begin{enumerate}
    \item If $M_1(c)$ is a Sasakian space form 
    ($c_1=\dfrac{c+3}{4}$, $c_2=c_3=\dfrac{c-1}{4}$), then
    \begin{align*}
    \rho^{\mathcal{V}} \le 
    \begin{cases}
    \delta_C^{\mathcal{V}}(r-1) + \dfrac{c+3}{4} 
    + \dfrac{3(c-1)}{4r}\cos^2\theta 
    - \dfrac{c-1}{2r}, 
    & \xi_1 \in \Gamma(\ker F_*),\\[8pt]
    \delta_C^{\mathcal{V}}(r-1) + \dfrac{c+3}{4} 
    + \dfrac{3(c-1)}{4(r-1)}\cos^2\theta,
    & \xi_1 \in \Gamma((\ker F_*)^{\perp}),
    \end{cases}
    \end{align*}
    and similarly for $\hat{\delta}_C^{\mathcal{V}}(r-1)$.

    \item If $M_1(c)$ is a Kenmotsu space form 
    ($c_1=\dfrac{c-3}{4}$, $c_2=c_3=\dfrac{c+1}{4}$), then
    \begin{align*}
    \rho^{\mathcal{V}} \le 
    \begin{cases}
    \delta_C^{\mathcal{V}}(r-1) + \dfrac{c-3}{4} 
    + \dfrac{3(c+1)}{4r}\cos^2\theta 
    - \dfrac{c+1}{2r}, 
    & \xi_1 \in \Gamma(\ker F_*),\\[8pt]
    \delta_C^{\mathcal{V}}(r-1) + \dfrac{c-3}{4} 
    + \dfrac{3(c+1)}{4(r-1)}\cos^2\theta,
    & \xi_1 \in \Gamma((\ker F_*)^{\perp}),
    \end{cases}
    \end{align*}
    and similarly for $\hat{\delta}_C^{\mathcal{V}}(r-1)$.

    \item If $M_1(c)$ is a cosymplectic space form 
    ($c_1=c_2=c_3=\dfrac{c}{4}$), then
    \begin{align*}
    \rho^{\mathcal{V}} \le 
    \begin{cases}
    \delta_C^{\mathcal{V}}(r-1) + \dfrac{c}{4} 
    + \dfrac{3c}{4r}\cos^2\theta 
    - \dfrac{c}{2r}, 
    & \xi_1 \in \Gamma(\ker F_*),\\[8pt]
    \delta_C^{\mathcal{V}}(r-1) + \dfrac{c}{4} 
    + \dfrac{3c}{4(r-1)}\cos^2\theta,
    & \xi_1 \in \Gamma((\ker F_*)^{\perp}),
    \end{cases}
    \end{align*}
    and similarly for $\hat{\delta}_C^{\mathcal{V}}(r-1)$.

    \item If $M_1(c)$ is an almost $C(\alpha)$ space form 
    ($c_1=\dfrac{c+3\alpha^2}{4}$, 
    $c_2=c_3=\dfrac{c-\alpha^2}{4}$), then
    \begin{align*}
    \rho^{\mathcal{V}} \le 
    \begin{cases}
    \delta_C^{\mathcal{V}}(r-1) + \dfrac{c+3\alpha^2}{4} 
    + \dfrac{3(c-\alpha^2)}{4r}\cos^2\theta 
    - \dfrac{c-\alpha^2}{2r}, 
    & \xi_1 \in \Gamma(\ker F_*),\\[8pt]
    \delta_C^{\mathcal{V}}(r-1) + \dfrac{c+3\alpha^2}{4} 
    + \dfrac{3(c-\alpha^2)}{4(r-1)}\cos^2\theta,
    & \xi_1 \in \Gamma((\ker F_*)^{\perp}),
    \end{cases}
    \end{align*}
    and similarly for $\hat{\delta}_C^{\mathcal{V}}(r-1)$.
\end{enumerate}
In all cases, the equality conditions are the same as in 
Theorem~\ref{thm:complex_subm_vert}
\end{corollary}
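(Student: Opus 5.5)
The corollary follows from Theorem~\ref{thm:sasakian_subm_vert} by substituting the structure functions of each classical contact space form. I would treat the four items in parallel. In each item I take the pair $(c_1,c_2=c_3)$ listed in the list following Definition~\ref{def:gen_sasakian_space_form}. I then insert it into the two branches of \eqref{eq:sasakian_subm_vert}:
\[
\rho^{\calV}\le\delta_C^{\calV}(r-1)+c_1+\frac{3c_2}{r}\cos^2\theta-\frac{2c_3}{r}\qquad(\xi_1\in\Gamma(\ker F_*)),
\]
\[
\rho^{\calV}\le\delta_C^{\calV}(r-1)+c_1+\frac{3c_2}{r-1}\cos^2\theta\qquad(\xi_1\in\Gamma((\ker F_*)^{\perp})).
\]
Only elementary arithmetic remains.

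For the Sasakian case, set $c_1=(c+3)/4$ and $c_2=c_3=(c-1)/4$. The vertical-Reeb branch then gives the terms
\[
\frac{3(c-1)}{4r}\cos^2\theta \quad\text{and}\quad -\frac{2}{r}\cdot\frac{c-1}{4}=-\frac{c-1}{2r}.
\]
The horizontal-Reeb branch gives $\frac{3(c-1)}{4(r-1)}\cos^2\theta$ and has no $c_3$ term. The Kenmotsu, cosymplectic and almost $C(\alpha)$ cases are identical in form, with $(c-1)$ replaced by $(c+1)$, $c$ and $(c-\alpha^2)$ respectively, and $c_1$ replaced accordingly. The $\hat\delta_C^{\calV}$ versions follow in the same way from the ``similarly for $\hat\delta_C^{\calV}$'' clause of Theorem~\ref{thm:sasakian_subm_vert}. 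That clause rests on the same value of $\rho^{\calV}_{M_1}$ together with the second Tripathi-type estimate.

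For the equality statement, note that the substitution changes only the constant $\rho^{\calV}_{M_1}$ and leaves the $T$-dependent part of the inequality untouched. The equality characterization therefore carries over verbatim from Theorem~\ref{thm:sasakian_subm_vert}, and hence from Theorem~\ref{thm:complex_subm_vert}. Equality holds exactly when $T_{ij}^\alpha=0$ for $i\ne j$ and $T_{11}^\alpha=\cdots=T_{r-1,r-1}^\alpha=\tfrac12T_{rr}^\alpha$.

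There is no real obstacle here. The only point requiring care is to keep the two Reeb positions distinct. The norm identity \eqref{eq:norm_identity_contact} supplies $(r-1)\cos^2\theta$ in one case and $r\cos^2\theta$ in the other, and this is what produces the denominators $r$ and $r-1$ respectively. I would also check that the sign of the $c_3$ contribution, namely $-2c_3/r$, is carried over correctly in each specialization.
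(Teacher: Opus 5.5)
Your derivation of the inequalities is the paper's own route. The paper obtains the corollary by specializing Theorem~\ref{thm:sasakian_subm_vert}, and your substitutions are correct in all four cases. This includes $-\frac{2}{r}\cdot\frac{c-1}{4}=-\frac{c-1}{2r}$ in the vertical-Reeb branch and the absence of any $c_3$-term when $\xi_1$ is horizontal.

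The equality clause is where your proposal says something false, although the paper asserts the same thing. For the $\hat{\delta}_C^{\calV}$ versions, you say they rest on a \emph{second} Tripathi-type estimate. That estimate has different parameters, so its equality case cannot carry over verbatim. Concretely, with $L=\Span\{U_1,\dots,U_{r-1}\}$, set
\[
\hat{\mathcal P}=2r(r-1)C^{\calV}-\tfrac12(r-1)(2r-1)C(L)+2\tau^{\calV}_{M_1}-2\tau^{\calV}.
\]
Use $2\tau^{\calV}=2\tau^{\calV}_{M_1}+\|\sum_iT_{U_i}U_i\|^2-\|T\|^2$. The off-diagonal $T^\alpha_{ij}$ then enter $\hat{\mathcal P}$ with positive coefficients. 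For each $\alpha$, the diagonal part is
\[
\tfrac{2r-3}{2}\sum_{i<r}(T^\alpha_{ii})^2+2(r-1)(T^\alpha_{rr})^2-2\sum_{i<j}T^\alpha_{ii}T^\alpha_{jj}.
\]
This is Lemma~\ref{lem:tripathi} with $\lambda_1=\frac{2r-3}{2}$ and $\lambda_2=2(r-1)$, not $\lambda_1=r$ and $\lambda_2=\frac{r-1}{2}$. Its constrained minimizer is $z_1=\cdots=z_{r-1}=\frac{2k}{2r-1}$, $z_r=\frac{k}{2r-1}$.

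Hence equality in the $\hat{\delta}_C^{\calV}$ inequalities holds if and only if $T^\alpha_{ij}=0$ for $i\neq j$ and $T^\alpha_{11}=\cdots=T^\alpha_{r-1,r-1}=2T^\alpha_{rr}$. The ratio $\tfrac12T^\alpha_{rr}$ is correct only for the $\delta_C^{\calV}$ inequalities. This defect comes from Theorem~\ref{thm:complex_subm_vert} and Remark~\ref{rem:tripathi_positivity}, which only ever use $\lambda_1=r$. It is therefore not a departure from the paper's argument. Still, your ``carries over verbatim'' step should be restricted to $\delta_C^{\calV}$, and the $\hat{\delta}_C^{\calV}$ equality case should be stated separately.
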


\section{Casorati Inequalities for the Horizontal Distribution}\label{sec:horizontal}

We now turn to the horizontal distribution $\calH=(\ker F_*)^{\perp}$, where the
geometric picture is quite different. For horizontal distributions O'Neill's
tensor $A$ plays the central role, and the equality condition in the Casorati
inequality corresponds to integrability of the horizontal distribution rather
than to quasi-umbilicity. Recall from~\eqref{eq:tensor_A} that $A$ measures the
failure of $\calH$ to be integrable; when $A=0$, the horizontal distribution is
integrable by Frobenius' theorem. Throughout we assume $\dim\calH\geq 3$.

 Let 
$\{X_1,\ldots,X_s\}$ be an orthonormal basis of $\calH$ 
at $p$, where $s = \dim\calH \geq 3$. The components of 
O'Neill's tensor $A$ are
\begin{equation}\label{eq:A_components}
A_{ij}^{k} = g_1(A_{X_i}X_j,\, U_k), \quad 
i,j = 1,\ldots,s,\; k = 1,\ldots,r,
\end{equation}
with squared norm
\begin{equation}\label{eq:A_norm}
\|A\|^2 = \sum_{k=1}^{r}\sum_{i,j=1}^s 
           (A_{ij}^{k})^2.
\end{equation}
The Casorati curvature $C^{\calH}$ and normalized variants 
$\delta_C^{\calH}(s-1)$, $\hat{\delta}_C^{\calH}(s-1)$ 
are defined in the same manner with 
\eqref{eq:casorati_vertical}--\eqref{eq:delta_hat_casorati_vert}, 
replacing $T_{ij}^\alpha$ with $A_{ij}^k$ and $r$ with 
$s$. The scalar curvature of the horizontal distribution 
and its normalized form are
\begin{equation}\label{eq:normalized_scalar_horiz}
\tau^{\calH}(p) 
= \dfrac{1}{2}\sum_{i,j=1}^s 
  g_1(R^{M_1}(X_i,X_j)X_j,X_i), \qquad
\rho^{\calH} = \dfrac{2\tau^{\calH}}{s(s-1)},
\end{equation}
where $\rho_{\calH}^{\calH}$ denotes the normalized scalar 
curvature computed from the curvature tensor $R^{\calH}$ 
of the horizontal distribution itself. The equality 
condition for the case of horizontal distribution is $A \equiv 0$, 
which by Proposition~\ref{prop:oneill_tensors}(iv) is 
equivalent to complete integrability of $\calH$.

\subsection{Generalized complex space forms}

\begin{theorem}\label{thm:complex_subm_horiz}
Let $F:(M_1^{m_1}(c_1,c_2),g_1,J_1)\to (M_2^{m_2},g_2)$ be a pointwise slant Riemannian submersion from a generalized complex space form onto a Riemannian manifold with horizontal space of dimension $s\ge 3$ and slant function $\theta$. Then
\begin{equation}\label{eq:complex_horiz}
\rho_{\calH}^{\calH} \le \delta_C^{\calH}(s-1) + c_1 + \dfrac{3c_2}{s-1}\cos^2\theta,
\end{equation}
and similarly for $\hat{\delta}_C^{\calH}$.Equality holds if and only if the tensor $A$ vanishes identically, i.e., 
$A_XY = 0$ for all $X,Y \in \Gamma((\ker F_*)^\perp)$, which is equivalent 
to the horizontal distribution being completely integrable.
\end{theorem}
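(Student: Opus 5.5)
The plan follows the proof of Theorem~\ref{thm:complex_subm_vert}, with O'Neill's tensor $A$ replacing $T$ and the horizontal Gauss-type equation \eqref{eq:gauss_horizontal} replacing \eqref{eq:gauss_vertical}.

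\textbf{Step 1: ambient horizontal scalar curvature.} Fix an orthonormal basis $\{X_1,\ldots,X_s\}$ of $\calH$ at $p$. Insert the curvature formula \eqref{eq:gen_complex_curvature} into $2\tau^{\calH}=\sum_{i,j}g_1(R^{M_1}(X_i,X_j)X_j,X_i)$.

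The $c_1$-part contributes $s(s-1)c_1$.

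For the $c_2$-part, write $J_1X_j=P^{\calH}X_j+Q^{\calH}X_j$ as in \eqref{eq:J_decomposition_horizontal}. Then $g_1(X_j,J_1X_j)=0$ and $g_1(X_i,J_1X_j)=g_1(X_i,P^{\calH}X_j)$. The $c_2$-part therefore collapses to $3c_2\|P^{\calH}\|^2=3c_2\,s\cos^2\theta$, using the horizontal analogue of \eqref{eq:P_norm_slant_subm_hermitian} (Proposition~\ref{prop:key_slant_identities} with $d=s$). Hence
\[
\rho^{\calH}=c_1+\frac{3c_2}{s-1}\cos^2\theta .
\]

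\textbf{Step 2: the horizontal Gauss equation.} Contract \eqref{eq:gauss_horizontal} over $X=W=X_i$, $Y=Z=X_j$. The term $-2g_1(A_XY,A_ZW)$ becomes $-2g_1(A_{X_i}X_j,A_{X_j}X_i)=+2\|A_{X_i}X_j\|^2$ by skew-symmetry (Proposition~\ref{prop:oneill_tensors}(ii)). The term $g_1(A_YZ,A_XW)$ becomes $g_1(A_{X_j}X_j,A_{X_i}X_i)=0$. The term $-g_1(A_XZ,A_YW)$ becomes $-g_1(A_{X_i}X_j,A_{X_j}X_i)=+\|A_{X_i}X_j\|^2$. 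Summing over $i,j$ gives
\[
2\tau^{\calH}=2\tau^{\calH}_{\calH}+3\|A\|^2 ,
\]
that is,
\[
\rho^{\calH}_{\calH}=\rho^{\calH}-\frac{3}{s(s-1)}\|A\|^2\le\rho^{\calH}.
\]

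\textbf{Step 3: the Casorati comparison.} It remains to compare $\|A\|^2$ with the Casorati quantities built from $A_{ij}^k$. Here the diagonal entries $A_{ii}^k$ vanish, and $\sum_{j}A_{jj}^k=0$ plays the role of the zero mean curvature, so the hyperplane constraint of Lemma~\ref{lem:tripathi} is trivial with $k=0$.

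Following the standard Decu--Haesen--Verstraelen/Tripathi scheme, introduce
\[
\mathcal{P}=s(s-1)\delta_C^{\calH}(s-1)-s(s-1)\bigl(\rho^{\calH}-\rho^{\calH}_{\calH}\bigr),
\]
computed with an arbitrary hyperplane $L$. Rewrite $\mathcal{P}$ as a sum of the quadratic forms $f_k$ of Lemma~\ref{lem:tripathi} in the variables $A_{11}^k,\ldots,A_{ss}^k$ with $\lambda_1=s$ and $\lambda_2=(s-1)/2$. The off-diagonal entries then enter with nonnegative coefficients, which we must verify after inserting the factor $3$ from Step 2. Lemma~\ref{lem:tripathi} gives $\mathcal{P}\ge 0$. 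Taking the infimum over $L$ and combining with Step 1 yields \eqref{eq:complex_horiz}. The $\hat\delta_C^{\calH}$ version follows the same way, using the supremum over $L$ and the corresponding quadratic form.

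\textbf{Step 4: equality.} Equality forces every off-diagonal $A_{ij}^k$ to vanish, since those coefficients are positive. The diagonal entries already vanish by skew-symmetry. Hence $A_XY=0$ on $\calH$, and by Proposition~\ref{prop:oneill_tensors}(iv) this is equivalent to integrability of $\calH$. Conversely, if $A\equiv0$, then both sides agree, since $\delta_C^{\calH}=0$ and $\rho^{\calH}_{\calH}=\rho^{\calH}$.

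\textbf{Main obstacle.} I expect Step 3 to be the hardest. The horizontal Gauss equation carries the coefficient $3$ in front of $\|A\|^2$, whereas the vertical equation effectively has coefficient $1$, and the normalization of $\delta_C^{\calH}$ is inherited verbatim from the vertical case. So I must check carefully that the residual coefficients of $(A_{ij}^k)^2$ stay nonnegative. If they do not, the fallback is the crude but sufficient bound
\[
\rho^{\calH}_{\calH}\le\rho^{\calH}\le\rho^{\calH}+\delta_C^{\calH}(s-1),
\]
which uses only $\|A\|^2\ge0$ and $\delta_C^{\calH}\ge0$. Its equality case is still $A\equiv0$.
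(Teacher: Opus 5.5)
\textbf{Step 3 and the fallback.} Your Step~3, as set up, does not prove the claim. Given your Step~2, what must be shown is $s(s-1)\,\delta_C^{\calH}(s-1)+3\|A\|^2\ge 0$. Your $\mathcal{P}=s(s-1)\,\delta_C^{\calH}(s-1)-3\|A\|^2$ has the opposite sign. For a fixed hyperplane it equals $\frac{s-7}{2}\|A\|^2+\frac{s^2-1}{2}C(L)$, which can be negative for $s<7$, and its nonnegativity would only give the lower bound $\rho_{\calH}^{\calH}\ge\rho^{\calH}-\delta_C^{\calH}(s-1)$. Lemma~\ref{lem:tripathi} adds nothing either: every $A_{ii}^k=0$, so the Tripathi form is evaluated at the origin.

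What does work, relative to the formulas as printed, is your fallback, and it is exactly the paper's argument. The paper's $\mathcal{P}^{\calH}=\frac12 s(s+5)C^{\calH}+\frac12(s^2-1)C(L^{\calH})$ is just $s(s-1)\bigl[\frac12 C^{\calH}+\frac{s+1}{2s}C(L^{\calH})\bigr]+3\|A\|^2$, nonnegative term by term, with no optimization involved. For the $\hat{\delta}_C^{\calH}$ version you still need $\hat{\delta}_C^{\calH}(s-1)\ge 0$. This follows from $(s-1)C(L)\le\|A\|^2=sC^{\calH}$, which gives $\hat{\delta}_C^{\calH}(s-1)\ge\frac{2s-3}{2(s-1)}C^{\calH}$.

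\textbf{The sign in Step 2.} Your worry about the factor $3$ is well founded, and it exposes a flaw your fallback shares with the paper. Read in the convention of Step~1, where $g_1(R(X,Y)Y,X)$ is the sectional curvature, Theorem~\ref{thm:gauss_horizontal} gives $K^{M_1}(X,Y)=K^{\calH}(X,Y)+3\|A_XY\|^2$. This already fails for the Hopf fibration $S^3(1)\to S^2(1/2)$, where $K^{M_1}=1$, $K^{\calH}=4$ and $\|A_XY\|^2=1$. The printed formula is O'Neill's, written for his convention $R_{XY}=\nabla_{[X,Y]}-[\nabla_X,\nabla_Y]$. With consistent signs one has O'Neill's $K^{M_1}=K^{\calH}-3\|A_XY\|^2$, so $\rho_{\calH}^{\calH}=\rho^{\calH}+\frac{3}{s(s-1)}\|A\|^2$. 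Then your $\mathcal{P}$ is exactly the quantity that must be nonnegative, and in general it is not.

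Take the twistor fibration of $\mathbb{C}P^3$ (holomorphic sectional curvature $4$, so $c_1=c_2=1$) onto $S^4$ of curvature $4$. It has $\theta\equiv 0$, $s=4$, and $\|A_XY\|^2=1-g_1(J_1X,Y)^2$ for orthonormal horizontal $X,Y$. Hence $\|A\|^2=8$, $C(L)=4/3$ for every hyperplane, $\delta_C^{\calH}(3)=11/6$ and $\rho^{\calH}=2$, but $\rho_{\calH}^{\calH}=4>23/6$. So neither your fallback nor the paper's proof establishes the $\delta_C^{\calH}$ inequality under a consistent convention. With the correct sign, the inequality follows trivially only for $s\ge 7$, where $\frac{3}{s(s-1)}\|A\|^2\le\frac12 C^{\calH}\le\delta_C^{\calH}(s-1)$.

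\textbf{The slant identity in Step 1.} Step~1's $\|P^{\calH}\|^2=s\cos^2\theta$ needs $\theta$ to be a slant function of $\calH$ itself. The vertical condition of Definition~\ref{def:ps_subm_hermitian} only gives $\calH=Q^{\calV}(\calV)\oplus\mu$ with $\mu$ $J_1$-invariant, and then $\|P^{\calH}\|^2=r\cos^2\theta+(s-r)$ for $\theta\neq 0$.
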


\begin{proof}
Using the curvature formula~\eqref{eq:gen_complex_curvature} 
for $M_1(c_1,c_2)$ and an orthonormal basis 
$\{X_1,\ldots,X_s\}$ of $(\ker F_*)^\perp$, the same 
computation as in Theorem~\ref{thm:complex_subm_vert} 
with decomposition~\eqref{eq:J_decomposition_horizontal} 
and $g_1(X_j, J_1X_j) = 0$ gives
\begin{equation}\label{eq:rho_H_complex_horiz}
\rho^{\calH} = c_1 + \dfrac{3c_2}{s(s-1)}\|P^{\calH}\|^2
= c_1 + \dfrac{3c_2}{s-1}\cos^2\theta,
\end{equation}
where the second equality uses 
$\|P^{\calH}\|^2 = s\cos^2\theta$ from 
equation~\eqref{eq:P_norm_slant_subm_hermitian}.\\

Setting $X_1 = X_4 = X_i$ and $X_2 = X_3 = X_j$ in 
the horizontal Gauss equation~\eqref{eq:gauss_horizontal} 
and summing over $i,j = 1,\ldots,s$, we obtain
\[
2\tau^{\calH} = 2\tau^{\calH}_{\calH} 
+ 3s\,C^{\calH} - \|\operatorname{tr} A^{\calH}\|^2,
\]
where $\operatorname{tr} A^{\calH} = \sum_i A_{X_i}X_i$. 
Since $A$ is skew-symmetric by 
Proposition~\ref{prop:oneill_tensors}(ii), we have 
$A_{X_i}X_i = 0$ for all $i$, so 
$\operatorname{tr} A^{\calH} = 0$ and the equation 
simplifies to
\[
2\tau^{\calH}_{\calH} = 2\tau^{\calH} - 3s\,C^{\calH}.
\]
Constructing the polynomial 
$\mathcal{P}^{\calH} = \dfrac{1}{2}s(s-1)C^{\calH} 
+ \dfrac{1}{2}(s^2-1)C(L^{\calH}) 
+ 2\tau^{\calH} - 2\tau^{\calH}_{\calH}$
and substituting $2\tau^{\calH}_{\calH}=2\tau^{\calH}-3s\,C^{\calH}$, a
direct computation gives
\[
\mathcal{P}^{\calH}
= \dfrac{1}{2}s(s+5)\,C^{\calH}
+ \dfrac{1}{2}(s^2-1)\,C(L^{\calH}).
\]
Since $A$ is skew-symmetric, $A_{ij}^k=0$ whenever $i=j$, so both
$C^{\calH}$ and $C(L^{\calH})$ involve only off-diagonal components and
are non-negative. Hence $\mathcal{P}^{\calH}\ge 0$, with equality if and
only if every $A_{ij}^k=0$. Rearranging and
substituting~\eqref{eq:rho_H_complex_horiz} gives~\eqref{eq:complex_horiz}.

Equality holds if and only if $\mathcal{P}^{\calH} = 0$, 
which requires $A_{ij}^k = 0$ for all $i \neq j$, i.e., 
$A \equiv 0$. By 
Proposition~\ref{prop:oneill_tensors}(iv), this is 
equivalent to complete integrability of $(\ker F_*)^\perp$.
\end{proof}

\begin{corollary}\label{cor:subm_horiz_real}
Let $F:(M_1^{m_1},g_1)\to (M_2^{m_2},g_2)$ be a pointwise slant Riemannian submersion with horizontal space of dimension $s\ge 3$ and slant function $\theta$.
\begin{enumerate}
    \item If $M_1(c)$ is a real space form, then
    \[\rho_{\calH}^{\calH} \le \delta_C^{\calH}(s-1) + c, \quad \rho_{\calH}^{\calH} \le \hat{\delta}_C^{\calH}(s-1) + c.\]
    
    \item If $M_1(c)$ is a complex space form, then
    \[\rho_{\calH}^{\calH} \le \delta_C^{\calH}(s-1) + \dfrac{c}{4}\left(1 + \dfrac{3}{s-1}\cos^2\theta\right),\]
    and similarly for $\hat{\delta}_C^{\calH}$.\\
    
    \item If $M_1(c)$ is a real K\"{a}hler space form, then
    \[\rho_{\calH}^{\calH} \le \delta_C^{\calH}(s-1) + \dfrac{c+3\alpha}{4} + \dfrac{3(c-\alpha)}{4(s-1)}\cos^2\theta,\]
    and similarly for $\hat{\delta}_C^{\calH}$.
\end{enumerate}
\end{corollary}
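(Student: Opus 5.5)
The corollary follows from Theorem~\ref{thm:complex_subm_horiz} by specializing $c_1$ and $c_2$. I would not redo any optimization. The inequality $\rho_{\calH}^{\calH}\le \delta_C^{\calH}(s-1)+c_1+\frac{3c_2}{s-1}\cos^2\theta$ and its $\hat{\delta}_C^{\calH}$ counterpart hold for every generalized complex space form $M_1(c_1,c_2)$ with $s\ge 3$. So it is enough to insert the pairs $(c_1,c_2)$ listed after Definition~\ref{def:gen_complex_space_form} and simplify.

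The steps, in order:
\begin{enumerate}[label=(\roman*)]
\item \emph{Real space form.} Set $c_1=c$ and $c_2=0$. The slant term drops out and we get $\rho_{\calH}^{\calH}\le\delta_C^{\calH}(s-1)+c$, and likewise for $\hat{\delta}_C^{\calH}$. There is a subtlety here. A real space form need not carry an almost complex structure making it an $M(c_1,0)$ with $J_1$, and without $J_1$ the notion of a pointwise slant submersion is not defined. However, the computation of $\rho^{\calH}$ in the proof of Theorem~\ref{thm:complex_subm_horiz} uses only the $c_1$-part of \eqref{eq:gen_complex_curvature} when $c_2=0$. The Gauss-equation and polynomial argument is then purely Riemannian. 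I would note that the bound holds for any Riemannian submersion from a space of constant curvature $c$, with or without $\theta$.
\item \emph{Complex space form.} Set $c_1=c_2=c/4$. Factoring gives $\frac{c}{4}\bigl(1+\frac{3}{s-1}\cos^2\theta\bigr)$.
\item \emph{Real K\"ahler space form.} Set $c_1=(c+3\alpha)/4$ and $c_2=(c-\alpha)/4$. This gives $\frac{c+3\alpha}{4}+\frac{3(c-\alpha)}{4(s-1)}\cos^2\theta$.
\end{enumerate}
In each case the $\hat{\delta}_C^{\calH}$ version is obtained by the identical substitution into the second inequality of Theorem~\ref{thm:complex_subm_horiz}.

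The equality statements carry over unchanged: equality holds exactly when $A\equiv 0$, that is, when $(\ker F_*)^\perp$ is integrable.

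The only point needing care is item (i), the real space form, where the hypothesis should be read with $c_2=0$. The rest is routine substitution.
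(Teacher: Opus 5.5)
Your reduction is what the paper intends: it gives no argument beyond substituting $(c_1,c_2)=(c,0)$, $(c/4,c/4)$ and $((c+3\alpha)/4,(c-\alpha)/4)$ into Theorem~\ref{thm:complex_subm_horiz}, and your algebra is right. The gap is the step you take as given: Theorem~\ref{thm:complex_subm_horiz} is false for small $s$, so the corollary is false too. Your remark in (i), that the bound holds for every Riemannian submersion from a space of constant curvature $c$, is exactly where this shows.

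The paper computes sectional curvature as $g_1(R(X,Y)Y,X)$; that is how $r(r-1)c_1$ arises in the proof of Theorem~\ref{thm:complex_subm_vert}. But \eqref{eq:gauss_horizontal} is O'Neill's formula copied in his opposite curvature convention. With consistent signs, $K^{M_1}(X,Y)=K^{\calH}(X,Y)-3\|A_XY\|^2$ for orthonormal horizontal $X,Y$, so horizontal planes gain curvature in the base. Hence $2\tau^{\calH}_{\calH}=2\tau^{\calH}+3sC^{\calH}$, not $2\tau^{\calH}-3sC^{\calH}$. The polynomial in that proof then becomes
\[
\tfrac12 s(s-7)C^{\calH}+\tfrac12(s^2-1)C(L^{\calH}),
\]
which can be negative for $3\le s\le 6$, because for skew $A$ the infimum of $C(L)$ can vanish.

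Here is a concrete counterexample. Write $\C^2=\mathbb{H}$ with $J$ given by left multiplication by $i$. Let $F$ be the quotient of flat $\C^2\setminus\{0\}$ by the free isometric action $z\mapsto e^{it}z$.
\begin{enumerate}[label=(\alph*)]
\item The total space is a real, complex and real K\"ahler space form with $c=\alpha=0$.
\item The vertical space is $\R\,iz$, and $J(iz)=-z$ is horizontal. So $F$ is pointwise slant with $\theta\equiv\pi/2$, and $s=3$.
\item At $|z|=1$ take the orthonormal horizontal frame $z,jz,kz$, extended as linear vector fields. The only nonzero components are $A_{jz}kz=-A_{kz}jz=-iz$.
\item Hence $C^{\calH}=2/3$, $C(\Span\{z,jz\})=0$ and $\sup C(L)=1$. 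This gives $\delta_C^{\calH}(2)=1/3$ and $\hat\delta_C^{\calH}(2)=1/2$.
\item The base is the cone $dt^2+\tfrac{t^2}{4}g_{S^2(1)}$. Its sectional curvatures at $t=1$ are $0,0,3$, so $\rho^{\calH}_{\calH}=1$.
\end{enumerate}
All three items would therefore require $1\le 1/3$, and $1\le 1/2$ for the $\hat\delta_C^{\calH}$ versions.

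The failure is not confined to $c=0$. Take the twistor fibration from $\C P^3$ of holomorphic curvature $4$ onto $S^4$ of curvature $4$. It has $\theta=0$, $s=4$, $C^{\calH}=2$ and $C(L)\equiv 4/3$. So $\rho^{\calH}_{\calH}=4$, while the right side of (ii) is $11/6+2=23/6$.

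What actually holds is the identity $\rho^{\calH}_{\calH}=\rho^{\calH}+\frac{3}{s-1}C^{\calH}$. Since $\inf C(L)\ge 0$, this gives the $\delta_C$ bound for $s\ge 7$, but not on the range $s\ge 3$ that the corollary claims.
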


\subsection{Generalized Sasakian space forms}

Finally, for generalized Sasakian space forms:

\begin{theorem}\label{thm:sasakian_subm_horiz}
Let $F:(M_1^{m_1}(c_1,c_2,c_3),\phi_1,\xi_1,\eta_1,g_1)\to (M_2^{m_2},g_2)$ be a pointwise slant Riemannian submersion from a generalized Sasakian space form onto a Riemannian manifold with horizontal space of dimension $s\ge 3$ and slant function $\theta$. Then
\begin{equation}\label{eq:sasakian_horiz}
\rho_{\calH}^{\calH} \le \begin{cases}
\delta_C^{\calH}(s-1) + c_1 + \dfrac{3c_2}{s}\cos^2\theta - \dfrac{2}{s}c_3, & \text{if } \xi_1 \in \Gamma((\ker F_*)^\perp),\\[6pt]
\delta_C^{\calH}(s-1) + c_1 + \dfrac{3c_2}{s-1}\cos^2\theta, & \text{if } \xi_1 \in \Gamma(\ker F_*),
\end{cases}
\end{equation}
and similarly for $\hat{\delta}_C^{\calH}$. Equality holds if and only if the tensor $A$ vanishes identically, i.e., 
$A_XY = 0$ for all $X,Y \in \Gamma((\ker F_*)^\perp)$, which is equivalent 
to the horizontal distribution being completely integrable.
\end{theorem}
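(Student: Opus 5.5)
The plan follows the proof of Theorem~\ref{thm:complex_subm_horiz}. The only changes are the ambient curvature computation, where the $c_3$ (Reeb) term of \eqref{eq:gen_sasakian_curvature} enters, and the norm identity \eqref{eq:norm_identity_contact} in place of \eqref{eq:P_norm_slant_subm_hermitian}.

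First I compute $2\tau^{\calH}=\sum_{i,j=1}^s g_1(R^{M_1}(X_i,X_j)X_j,X_i)$ for an orthonormal basis $\{X_1,\ldots,X_s\}$ of $(\ker F_*)^\perp$. When $\xi_1$ is horizontal, I choose the basis with $X_s=\xi_1$.
\begin{itemize}
\item The $c_1$-part gives $s(s-1)c_1$.
\item The $c_2$-part uses \eqref{eq:phi1_decomp_H} and $g_1(X_j,\phi_1X_j)=0$. Only the horizontal component $P^{\calH}$ survives, giving $3c_2\|P^{\calH}\|^2$.
\item The $c_3$-part is $c_3\sum_{i,j}\{\eta_1(X_i)^2+\eta_1(X_j)^2-2\delta_{ij}\eta_1(X_i)\eta_1(X_j)\}$, up to the sign convention in \eqref{eq:gen_sasakian_curvature}. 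This equals $-2(s-1)c_3$ when $\xi_1\in\Gamma(\calH)$ and vanishes when $\xi_1\in\Gamma(\ker F_*)$.
\end{itemize}
Inserting $\|P^{\calH}\|^2=(s-1)\cos^2\theta$, respectively $s\cos^2\theta$, from \eqref{eq:norm_identity_contact} gives
\[
\rho^{\calH}=c_1+\frac{3c_2}{s}\cos^2\theta-\frac{2c_3}{s}\quad(\xi_1\in\Gamma(\calH)),\qquad \rho^{\calH}=c_1+\frac{3c_2}{s-1}\cos^2\theta\quad(\xi_1\in\Gamma(\ker F_*)).
\]

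Second, I take the horizontal Gauss equation \eqref{eq:gauss_horizontal} with $X=W=X_i$, $Y=Z=X_j$ and sum over $i,j$. Skew-symmetry of $A$ on $\calH$ (Proposition~\ref{prop:oneill_tensors}(ii)) kills the trace term and gives $2\tau^{\calH}_{\calH}=2\tau^{\calH}-3s\,C^{\calH}$, exactly as before. Since this step is purely Riemannian, it is unaffected by the contact structure. I then form
\[
\mathcal{P}^{\calH}=\tfrac12 s(s-1)C^{\calH}+\tfrac12(s^2-1)C(L^{\calH})+2\tau^{\calH}-2\tau^{\calH}_{\calH}
\]
for a hyperplane $L^{\calH}\subset\calH$. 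It is a nonnegative combination of the squares $(A_{ij}^k)^2$ with $i\ne j$, so $\mathcal{P}^{\calH}\ge 0$, with equality iff all $A_{ij}^k=0$. Taking the infimum over hyperplanes yields $\rho^{\calH}_{\calH}\le\delta_C^{\calH}(s-1)+\rho^{\calH}$. The analogous sup-version yields the $\hat\delta_C^{\calH}$ bound. Substituting the two values of $\rho^{\calH}$ gives \eqref{eq:sasakian_horiz}.

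For the equality case, $\mathcal{P}^{\calH}=0$ forces $A_{X_i}X_j=0$ for all $i\neq j$, and skew-symmetry handles $i=j$. Hence $A$ vanishes on $\calH\times\calH$, which by Proposition~\ref{prop:oneill_tensors}(iv) is integrability of $\calH$. Conversely, $A\equiv0$ makes $\mathcal{P}^{\calH}=0$, so equality holds.

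I expect the main obstacle to be the $c_3$ bookkeeping. Two points need care: the sign and the factor $2(s-1)$, and checking that the slant identity is only imposed on $\calH\cap\ker\eta_1$. In particular, I must confirm that $\phi_1\xi_1=0$ contributes nothing to $\|P^{\calH}\|^2$, so that the effective dimension is $s-1$ in the first case.
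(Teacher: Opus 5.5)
Your outline follows the paper's proof step for step. It also inherits the paper's error at the step that carries the argument, the identity $2\tau^{\calH}_{\calH}=2\tau^{\calH}-3s\,C^{\calH}$.

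\textbf{The sign error.} The curvature convention is fixed by \eqref{eq:gen_sasakian_curvature}: there $g_1(R(X,Y)Y,X)$ is the sectional curvature, which is how you get $s(s-1)c_1$. But \eqref{eq:gauss_horizontal} is O'Neill's equation in his own convention $R_{XY}=\nabla_{[X,Y]}-[\nabla_X,\nabla_Y]$, in which $g_1(R_{XY}Y,X)=-K$. If you substitute $X=W=X_i$, $Y=Z=X_j$ and read the result as sectional curvatures, you get $K_{M_1}=K^{\calH}+3|A_XY|^2$. The correct relation (O'Neill's corollary) is $K_{M_1}(X,Y)=K^{\calH}(X,Y)-3|A_XY|^2$ for orthonormal horizontal $X,Y$: horizontal planes in $M_1$ are less curved than their images in $M_2$. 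Hence $2\tau^{\calH}_{\calH}=2\tau^{\calH}+3s\,C^{\calH}$. For $L=\Span\{X_1,\dots,X_{s-1}\}$ with $X_s\perp L$, your polynomial is then
\[
\mathcal{P}^{\calH}=(s-3)\sum_{k}\sum_{i,j=1}^{s-1}(A_{ij}^k)^2+(s-7)\sum_{k}\sum_{i=1}^{s-1}(A_{is}^k)^2 .
\]
This is not a nonnegative combination of squares when $3\le s\le 6$, so the step ``$\mathcal{P}^{\calH}\ge0$'' fails.

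\textbf{The inequality itself is false for small $s$.} Take the quaternionic Hopf fibration $S^7(1)\to S^4(1/2)$ with fibres $p\,Sp(1)$. Give $S^7$ the Sasakian structure induced by $v\mapsto vi$ on $\mathbb{H}^2$.
\begin{enumerate}[label=(\roman*)]
\item Then $\xi_1=-pi$ is vertical, and $\phi_1$ sends $pj\mapsto -pk$ and $pk\mapsto pj$, so $\theta\equiv 0$.
\item We have $c_1=1$, $c_2=c_3=0$ and $s=4$.
\item Since $K^{\calH}=4$ and $K_{M_1}=1$, O'Neill's relation gives $|A_XY|=1$ for all orthonormal horizontal $X,Y$.
\item Hence $C^{\calH}=3$ and $C(L)=2$ for every hyperplane $L$, so $\delta_C^{\calH}(3)=\tfrac{11}{4}$.
\end{enumerate}
The claimed bound $\rho^{\calH}_{\calH}\le \tfrac{11}{4}+1=\tfrac{15}{4}$ is therefore violated, because $\rho^{\calH}_{\calH}=4$. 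Likewise, $S^5\to\mathbb{C}P^2(4)$ gives $2>\tfrac{23}{12}$.

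With the correct sign one has the identity $\rho^{\calH}_{\calH}=\rho^{\calH}+\tfrac{3}{s-1}C^{\calH}$. The argument therefore yields the stated $\delta_C^{\calH}$-bound only for $s\ge 7$, where $\tfrac{3}{s-1}C^{\calH}\le\tfrac12C^{\calH}\le\delta_C^{\calH}(s-1)$.

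\textbf{The obstacle you flagged.} The values $\|P^{\calH}\|^2=(s-1)\cos^2\theta$, respectively $s\cos^2\theta$, require the slant condition to be imposed on $\calH\cap\ker\eta_1$. If $\theta$ is the vertical slant function of the definition, take $\xi_1$ vertical and $\theta\neq0$. Then $\calH=Q^{\calV}(\calV\cap\ker\eta_1)\oplus\mu$ with $\mu$ invariant under $\phi_1$, which gives $\|P^{\calH}\|^2=s-(r-1)\sin^2\theta$. This differs from $s\cos^2\theta$ unless $s=r-1$.

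A minor slip: your displayed $c_3$ bracket has the opposite sign to what \eqref{eq:gen_sasakian_curvature} gives, namely $2\delta_{ij}\eta_1(X_i)\eta_1(X_j)-\eta_1(X_i)^2-\eta_1(X_j)^2$. The value $-2(s-1)c_3$ that you state is nonetheless correct.
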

\begin{proof}
The argument parallels Theorem~\ref{thm:complex_subm_horiz} for the
$c_1,c_2$ terms and the optimization of the $A$-tensor, with the
additional $c_3$ (Reeb) term in~\eqref{eq:gen_sasakian_curvature}
contributing exactly as in Theorem~\ref{thm:sasakian_subm_vert},
according to whether $\xi_1$ is horizontal or vertical. By
\eqref{eq:norm_identity_contact},
$\|P^{\calH}\|^2=(s-1)\cos^2\theta$ when $\xi_1\in\Gamma((\ker F_*)^{\perp})$
and $\|P^{\calH}\|^2=s\cos^2\theta$ when $\xi_1\in\Gamma(\ker F_*)$, so
\[
\rho^{\calH}
= c_1+\frac{3c_2}{s(s-1)}\,(s-1)\cos^2\theta-\frac{2c_3}{s}
= c_1+\frac{3c_2}{s}\cos^2\theta-\frac{2c_3}{s},
\qquad \xi_1\in\Gamma((\ker F_*)^{\perp}),
\]
\[
\rho^{\calH}
= c_1+\frac{3c_2}{s(s-1)}\,s\cos^2\theta
= c_1+\frac{3c_2}{s-1}\cos^2\theta,
\qquad \xi_1\in\Gamma(\ker F_*).
\]
Substituting into the horizontal Casorati estimate
$\rho_{\calH}^{\calH}\leq\delta_C^{\calH}(s-1)+\rho^{\calH}$ obtained in
Theorem~\ref{thm:complex_subm_horiz} yields the two cases; the
$A\equiv0$ equality condition is unchanged from that theorem.
\end{proof}

Specializing Theorem~\ref{thm:sasakian_subm_horiz} to the classical
contact space forms yields explicit bounds in each case.
\begin{corollary}\label{cor:subm_horiz_sasakian}
Let $F:(M_1^{m_1},g_1)\to (M_2^{m_2},g_2)$ be a pointwise 
slant Riemannian submersion with horizontal space of dimension 
$s\ge 3$ and slant function $\theta$.

\begin{enumerate}
    \item If $M_1(c)$ is a Sasakian space form 
    ($c_1=\dfrac{c+3}{4}$, $c_2=c_3=\dfrac{c-1}{4}$), then
    \begin{align*}
    \rho_{\mathcal{H}}^{\mathcal{H}} \le 
    \begin{cases}
    \delta_C^{\mathcal{H}}(s-1) + \dfrac{c+3}{4} 
    + \dfrac{3(c-1)}{4s}\cos^2\theta 
    - \dfrac{c-1}{2s}, 
    & \xi_1 \in \Gamma((\ker F_*)^{\perp}),\\[8pt]
    \delta_C^{\mathcal{H}}(s-1) + \dfrac{c+3}{4} 
    + \dfrac{3(c-1)}{4(s-1)}\cos^2\theta,
    & \xi_1 \in \Gamma(\ker F_*),
    \end{cases}
    \end{align*}
    and similarly for $\hat{\delta}_C^{\mathcal{H}}(s-1)$.

    \item If $M_1(c)$ is a Kenmotsu space form 
    ($c_1=\dfrac{c-3}{4}$, $c_2=c_3=\dfrac{c+1}{4}$), then
    \begin{align*}
    \rho_{\mathcal{H}}^{\mathcal{H}} \le 
    \begin{cases}
    \delta_C^{\mathcal{H}}(s-1) + \dfrac{c-3}{4} 
    + \dfrac{3(c+1)}{4s}\cos^2\theta 
    - \dfrac{c+1}{2s}, 
    & \xi_1 \in \Gamma((\ker F_*)^{\perp}),\\[8pt]
    \delta_C^{\mathcal{H}}(s-1) + \dfrac{c-3}{4} 
    + \dfrac{3(c+1)}{4(s-1)}\cos^2\theta,
    & \xi_1 \in \Gamma(\ker F_*),
    \end{cases}
    \end{align*}
    and similarly for $\hat{\delta}_C^{\mathcal{H}}(s-1)$.

    \item If $M_1(c)$ is a cosymplectic space form 
    ($c_1=c_2=c_3=\dfrac{c}{4}$), then
    \begin{align*}
    \rho_{\mathcal{H}}^{\mathcal{H}} \le 
    \begin{cases}
    \delta_C^{\mathcal{H}}(s-1) + \dfrac{c}{4} 
    + \dfrac{3c}{4s}\cos^2\theta 
    - \dfrac{c}{2s}, 
    & \xi_1 \in \Gamma((\ker F_*)^{\perp}),\\[8pt]
    \delta_C^{\mathcal{H}}(s-1) + \dfrac{c}{4} 
    + \dfrac{3c}{4(s-1)}\cos^2\theta,
    & \xi_1 \in \Gamma(\ker F_*),
    \end{cases}
    \end{align*}
    and similarly for $\hat{\delta}_C^{\mathcal{H}}(s-1)$.

    \item If $M_1(c)$ is an almost $C(\alpha)$ space form 
    ($c_1=\dfrac{c+3\alpha^2}{4}$, 
    $c_2=c_3=\dfrac{c-\alpha^2}{4}$), then
    \begin{align*}
    \rho_{\mathcal{H}}^{\mathcal{H}} \le 
    \begin{cases}
    \delta_C^{\mathcal{H}}(s-1) + \dfrac{c+3\alpha^2}{4} 
    + \dfrac{3(c-\alpha^2)}{4s}\cos^2\theta 
    - \dfrac{c-\alpha^2}{2s}, 
    & \xi_1 \in \Gamma((\ker F_*)^{\perp}),\\[8pt]
    \delta_C^{\mathcal{H}}(s-1) + \dfrac{c+3\alpha^2}{4} 
    + \dfrac{3(c-\alpha^2)}{4(s-1)}\cos^2\theta,
    & \xi_1 \in \Gamma(\ker F_*),
    \end{cases}
    \end{align*}
    and similarly for $\hat{\delta}_C^{\mathcal{H}}(s-1)$.
\end{enumerate}
In all cases, the equality conditions are the same as in 
Theorem~\ref{thm:sasakian_subm_horiz}.
\end{corollary}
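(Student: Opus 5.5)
The plan is to obtain each of the four items by direct substitution into Theorem~\ref{thm:sasakian_subm_horiz}. That theorem already gives, for an arbitrary generalized Sasakian space form $M_1(c_1,c_2,c_3)$ and a pointwise slant submersion with $s\ge 3$, the bound $\rho_{\calH}^{\calH}\le \delta_C^{\calH}(s-1)+\rho^{\calH}$. Here $\rho^{\calH}=c_1+\frac{3c_2}{s}\cos^2\theta-\frac{2c_3}{s}$ when $\xi_1$ is horizontal, and $\rho^{\calH}=c_1+\frac{3c_2}{s-1}\cos^2\theta$ when $\xi_1$ is vertical. The same bound holds with $\hat{\delta}_C^{\calH}(s-1)$ in place of $\delta_C^{\calH}(s-1)$. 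So nothing beyond identifying $(c_1,c_2,c_3)$ for each classical contact space form from the list following Definition~\ref{def:gen_sasakian_space_form} is needed.

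For each item I will carry out the arithmetic in the same order. First I replace $c_1$ by the stated value, for instance $\frac{c+3}{4}$ in the Sasakian case. Next I compute $\frac{3c_2}{s}\cos^2\theta$, respectively $\frac{3c_2}{s-1}\cos^2\theta$; with $c_2=\frac{c-1}{4}$ this gives $\frac{3(c-1)}{4s}\cos^2\theta$, respectively $\frac{3(c-1)}{4(s-1)}\cos^2\theta$. Finally, in the case $\xi_1\in\Gamma((\ker F_*)^\perp)$ only, I simplify $\frac{2c_3}{s}$: with $c_3=\frac{c-1}{4}$ it becomes $\frac{c-1}{2s}$.

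The Kenmotsu ($c_2=c_3=\frac{c+1}{4}$), cosymplectic ($c_2=c_3=\frac c4$) and almost $C(\alpha)$ ($c_2=c_3=\frac{c-\alpha^2}{4}$) cases are handled identically. In each of them $c_2=c_3$, so the $c_3$-correction always takes the form $\frac{c_2\cdot 4}{2s}$, i.e.\ half the numerator of $c_2$ over $s$. This matches the displayed formulas. The $\hat{\delta}_C^{\calH}$ versions follow verbatim from the ``similarly'' clause of Theorem~\ref{thm:sasakian_subm_horiz}.

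The equality statement is inherited unchanged. The equality analysis in Theorem~\ref{thm:sasakian_subm_horiz} (via Theorem~\ref{thm:complex_subm_horiz}) depends only on the nonnegative polynomial $\mathcal{P}^{\calH}$ in the components $A_{ij}^k$, not on the particular values of $c_1,c_2,c_3$. Hence equality holds exactly when $A\equiv 0$, i.e.\ when $(\ker F_*)^\perp$ is integrable by Proposition~\ref{prop:oneill_tensors}(iv).

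There is no genuine obstacle here. The only point needing care is bookkeeping: the placement of $\xi_1$ must be respected, since the $-\frac{2c_3}{s}$ term and the denominator $s$ rather than $s-1$ arise precisely when $\xi_1$ is horizontal. Each simplified fraction must also be checked against the displayed form.
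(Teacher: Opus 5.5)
Your proposal is correct and follows the paper's route exactly: the corollary comes from substituting the classical values of $(c_1,c_2,c_3)$ into the two $\xi_1$-cases of Theorem~\ref{thm:sasakian_subm_horiz}, and the $\hat{\delta}_C^{\calH}$ versions and the $A\equiv 0$ equality condition carry over unchanged from that theorem. Your bookkeeping checks out against every displayed formula: the $-\frac{2c_3}{s}$ term and the denominator $s$ appear only when $\xi_1$ is horizontal, and $\frac{2c_3}{s}$ reduces to the numerator of $c_3$ over $2s$.
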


\section{Examples}\label{sec:examples}

We provide explicit examples illustrating the main theorems and the sharpness of
our inequalities. Example~\ref{ex:subm1} gives a properly slant Riemannian
submersion from a flat cosymplectic total space; Example~\ref{ex:subm2} treats a
warped-product submersion with integrable horizontal distribution; and
Example~\ref{ex:subm3} exhibits a genuinely varying slant function.

\begin{example}\label{ex:subm1}
We construct a pointwise slant Riemannian submersion 
from $\R^7$, equipped with a flat cosymplectic structure.

Let $M_1 = \R^7$ with coordinates 
$(t, x_1, y_1, x_2, y_2, x_3, y_3)$, Reeb vector 
field $\xi = \partial/\partial t$, and almost contact 
structure
\[
\phi\!\left(\dfrac{\partial}{\partial x_i}\right) 
= \dfrac{\partial}{\partial y_i}, \quad
\phi\!\left(\dfrac{\partial}{\partial y_i}\right) 
= -\dfrac{\partial}{\partial x_i}, \quad
\phi(\xi) = 0, \qquad i = 1,2,3.
\]
Fix $\alpha \in (0, \pi/2)$, and define 
$F : \R^7 \to \R^3$ by
\begin{equation}\label{eq:ex3_map}
F(t, x_1, y_1, x_2, y_2, x_3, y_3) 
= \bigl(\sin\alpha\, y_1 - \cos\alpha\, x_3,\;
  -\cos\alpha\, x_1 + \sin\alpha\, y_2,\;
  -\cos\alpha\, x_2 + \sin\alpha\, y_3\bigr).
\end{equation}

The pushforward is
\begin{align*}
F_*\!\left(\dfrac{\partial}{\partial t}\right) 
&= 0, &
F_*\!\left(\dfrac{\partial}{\partial x_1}\right) 
&= -\cos\alpha\, e_2, \\
F_*\!\left(\dfrac{\partial}{\partial y_1}\right) 
&= \sin\alpha\, e_1, &
F_*\!\left(\dfrac{\partial}{\partial x_2}\right) 
&= -\cos\alpha\, e_3, \\
F_*\!\left(\dfrac{\partial}{\partial y_2}\right) 
&= \sin\alpha\, e_2, &
F_*\!\left(\dfrac{\partial}{\partial x_3}\right) 
&= -\cos\alpha\, e_1, \\
F_*\!\left(\dfrac{\partial}{\partial y_3}\right) 
&= \sin\alpha\, e_3, & &
\end{align*}
where $\{e_1, e_2, e_3\}$ is the standard basis of 
$\R^3$. The kernel is
$\ker F_* = \Span\{\xi,\, V_1,\, V_2,\, V_3\}$,
where
\begin{equation}\label{eq:ex3_vertical}
V_1 = \cos\alpha\,
  \dfrac{\partial}{\partial y_1} 
  + \sin\alpha\,
  \dfrac{\partial}{\partial x_3}, \quad
V_2 = \cos\alpha\,
  \dfrac{\partial}{\partial y_2} 
  + \sin\alpha\,
  \dfrac{\partial}{\partial x_1}, \quad
V_3 = \cos\alpha\,
  \dfrac{\partial}{\partial y_3} 
  + \sin\alpha\,
  \dfrac{\partial}{\partial x_2}.
\end{equation}
One verifies $g_1(\xi, V_i) = 0$ and 
$g_1(V_i, V_j) = \delta_{ij}$, so 
$\dim\ker F_* = r = 4$. The horizontal space is
$(\ker F_*)^\perp = \Span\{H_1,\, H_2,\, H_3\}$,
where
\begin{equation}\label{eq:ex3_horizontal}
H_1 = \sin\alpha\,
  \dfrac{\partial}{\partial y_1} 
  - \cos\alpha\,
  \dfrac{\partial}{\partial x_3}, \quad
H_2 = -\cos\alpha\,
  \dfrac{\partial}{\partial x_1} 
  + \sin\alpha\,
  \dfrac{\partial}{\partial y_2}, \quad
H_3 = -\cos\alpha\,
  \dfrac{\partial}{\partial x_2} 
  + \sin\alpha\,
  \dfrac{\partial}{\partial y_3}.
\end{equation}
One checks $g_1(H_i, H_j) = \delta_{ij}$ and 
$F_*(H_i) = e_i$, confirms
$F^*g_{\R^3} = g_1|_{(\ker F_*)^\perp}$, 
so $F$ is a Riemannian submersion.

Since $\phi(\xi) = 0$, we compute the slant angle 
on $\calV' = \Span\{V_1, V_2, V_3\}$. We have
\begin{align*}
\phi(V_1) &= 
  -\cos\alpha\,\dfrac{\partial}{\partial x_1} 
  + \sin\alpha\,\dfrac{\partial}{\partial y_3}, \quad
\phi(V_2) = 
  -\cos\alpha\,\dfrac{\partial}{\partial x_2} 
  + \sin\alpha\,\dfrac{\partial}{\partial y_1}, \quad
\phi(V_3) = 
  -\cos\alpha\,\dfrac{\partial}{\partial x_3} 
  + \sin\alpha\,\dfrac{\partial}{\partial y_2}.
\end{align*}
Projecting $\phi(V_1)$ onto $\ker F_*$ 
and using $\phi(V_1) \perp \xi$:
\begin{align*}
g_1(\phi(V_1), V_1) &= 0, \\
g_1(\phi(V_1), V_2) 
&= g_1\!\left(-\cos\alpha\,
  \dfrac{\partial}{\partial x_1} 
  + \sin\alpha\,
  \dfrac{\partial}{\partial y_3},\;
  \cos\alpha\,
  \dfrac{\partial}{\partial y_2} 
  + \sin\alpha\,
  \dfrac{\partial}{\partial x_1}\right) 
= -\sin\alpha\cos\alpha, \\
g_1(\phi(V_1), V_3) 
&= g_1\!\left(-\cos\alpha\,
  \dfrac{\partial}{\partial x_1} 
  + \sin\alpha\,
  \dfrac{\partial}{\partial y_3},\;
  \cos\alpha\,
  \dfrac{\partial}{\partial y_3} 
  + \sin\alpha\,
  \dfrac{\partial}{\partial x_2}\right) 
= \sin\alpha\cos\alpha.
\end{align*}
Hence $$(\phi V_1)^{\ker F_*} 
= \sin\alpha\cos\alpha(-V_2 + V_3)$$ and 
$$\|(\phi V_1)^{\ker F_*}\|^2 
= 2\sin^2\!\alpha\cos^2\!\alpha.$$
By the cyclic symmetry of~\eqref{eq:ex3_vertical}, 
$$\|(\phi V_i)^{\ker F_*}\|^2 
= 2\sin^2\!\alpha\cos^2\!\alpha~ for~ i = ~1,2,3.$$ 
Since $\|\phi(V_i)\| = 1$, the slant angle $\theta$ 
satisfies
\begin{equation}\label{eq:ex3_slant}
\cos^2\theta 
= 2\sin^2\!\alpha\cos^2\!\alpha 
= \dfrac{1}{2}\sin^2\!2\alpha.
\end{equation}
For $\alpha \in (0,\pi/2)$, we have 
$\cos^2\theta \in (0, \dfrac{1}{2}]$, so 
$\theta \in [\pi/4, \pi/2)$. Thus $F$ is a 
{properly} pointwise slant Riemannian 
submersion with constant slant angle 
$\theta \in [\pi/4, \pi/2)$.\\

With the Euclidean metric and $\eta=dt$ one has $d\eta=0$, so
$(\R^7,\phi,\xi,\eta,g)$ is a flat cosymplectic ($C(0)$) space form,
a special case of a generalized Sasakian space form with
$c_1=c_2=c_3=0$. Theorem~\ref{thm:sasakian_subm_vert} then gives
\[
\rho^{\calV} \leq \delta_C^{\calV}(r-1) 
+ c_1 + \dfrac{3c_2}{r(r-1)}\|P^{\calV}\|^2 
= \delta_C^{\calV}(3).
\]
Since $\xi, V_1, V_2, V_3$ all have constant coefficients with respect
to the Euclidean metric, every Christoffel symbol vanishes, so
$\nabla^{\R^7}_{V_i}V_j = 0$. Hence $T_{V_i}V_j = 0$, and therefore
$T_{ij}^\alpha = 0$ for all $i,j,\alpha$; the fibers are totally geodesic
and in particular invariantly quasi-umbilical. As the metric is flat,
$\rho^{\calV} = 0$ and $\delta_C^{\calV}(3) = 0$, so both sides vanish
and equality holds, confirming the sharpness of the vertical inequality.
\end{example}
\begin{example}\label{ex:subm2}
Consider the warped product 
$M_1 = \R \times_f \C^n$ with metric
\[
g_1 = dt^2 + f^2(t)\,g_{\C^n},
\]
where $f : \R \to (0,\infty)$ is smooth. 
By~\cite{Alegre2004}, $M_1$ is a generalized 
Sasakian space form with
\[
c_1 = -\dfrac{f''}{f}, \quad c_2 = 0, \quad 
c_3 = -\dfrac{f''}{f} + \dfrac{(f')^2}{f^2}.
\]
Define $F : M_1 \to \R^{2n}$ by projection onto 
the $\C^n$ factor:
$F(t, z_1, \ldots, z_n) = (z_1, \ldots, z_n)$.
Then $\ker F_* = \Span\{\partial/\partial t\}$, 
the Reeb vector field $\xi = \partial/\partial t$ 
is vertical, and the horizontal distribution 
$\calH$ has dimension $s = 2n \geq 3$ for 
$n \geq 2$. Since $\phi$ preserves $\calH$, 
the slant angle is $\theta = 0$ (invariant case).

For warped products, O'Neill's tensor 
$A \equiv 0$ since the horizontal leaves 
$\{t\} \times \C^n$ are totally geodesic product 
factors~\cite{Falcitelli2004}. Hence equality 
holds in Theorem~\ref{thm:sasakian_subm_horiz}:
\[
\rho_{\calH}^{\calH} 
= \delta_C^{\calH}(2n-1) + \rho^{\calH}.
\]
For the explicit choice $f(t) = e^t$: 
$c_1 = -1$, $c_2 = c_3 = 0$, and since 
$\xi \in \ker F_*$, 
Theorem~\ref{thm:sasakian_subm_horiz} gives
\[
\rho_{\calH}^{\calH} 
= \delta_C^{\calH}(2n-1) - 1,
\]
which ensures the sharpness of the horizontal inequality.
\end{example}

\begin{example}\label{ex:subm3}
Let $M_1 = \R^7$ carry the Kenmotsu structure 
with coordinates 
$(t, x_1, y_1, x_2, y_2, x_3, y_3)$, metric
\[
g = dt^2 + e^{2t}\sum_{i=1}^3(dx_i^2 + dy_i^2),
\]
Reeb vector field $\xi = \partial/\partial t$, and 
almost contact structure
\[
\phi\!\left(\dfrac{\partial}{\partial x_i}\right) 
= \dfrac{\partial}{\partial y_i}, \quad
\phi\!\left(\dfrac{\partial}{\partial y_i}\right) 
= -\dfrac{\partial}{\partial x_i}, \quad
\phi(\xi) = 0.
\]
This is a Kenmotsu space form with $c = -1$, 
giving $c_1 = -1$, $c_2 = c_3 = 0$.

Define $F_\psi : M_1 \to \R^3$ by
\[
F_\psi(t, x_i, y_i) 
= e^t\bigl(
  \sin\psi(t)\, y_1 - \cos\psi(t)\, x_3,\;
  -\cos\psi(t)\, x_1 + \sin\psi(t)\, y_2,\;
  -\cos\psi(t)\, x_2 + \sin\psi(t)\, y_3\bigr),
\]
where $\psi : \R \to (0,\pi/2)$ is smooth. 
The factor $e^t$ compensates for the warped metric. 
When $\psi \equiv \alpha$ is constant, $F_\psi$ 
reduces to the construction of 
Example~\ref{ex:subm1} with $\dim\ker(F_\psi)_* = 4$ 
and $\cos^2\theta = \dfrac{1}{2}\sin^2 2\alpha$.
For non-constant $\psi$ with 
$\|\psi'\|_{C^0} < \varepsilon$ (for sufficiently 
small $\varepsilon > 0$), $F_\psi$ remains a 
Riemannian submersion with $\dim\ker(F_\psi)_* = 4$, 
and the slant angle varies as
\[
\cos^2\theta(t) = \dfrac{1}{2}\sin^2 2\psi(t),
\quad t \in \R.
\]
Since $c_2 = 0$, 
Corollary~\ref{cor:subm_vert_sasakian} (Kenmotsu 
case, $\xi \in \ker(F_\psi)_*$) gives
\[
\rho^{\calV} \leq \delta_C^{\calV}(3) - 1,
\]
independently of $\theta(t)$.
\end{example}

\begin{remark}\label{rem:examples}
Examples~\ref{ex:subm1} and~\ref{ex:subm2} exhibit equality, confirming the
sharpness of the vertical and horizontal inequalities respectively:
Example~\ref{ex:subm1} provides a properly slant Riemannian submersion with $\theta\in[\pi/4,\pi/2)$ and vertical dimension $r=4\ge 3$, while
Example~\ref{ex:subm2} attains equality on the horizontal side. 
Example~\ref{ex:subm3} illustrates a genuinely varying slant function
$\theta(t)$.
\end{remark}

\section{ Concluding Remarks}\label{sec:discussion}

We have established optimal Casorati inequalities for pointwise slant Riemannian
submersions from generalized complex and generalized Sasakian space forms,
treating the vertical and horizontal distributions separately and
characterizing equality by invariant quasi-umbilicity of the fibers and by
integrability of the horizontal distribution, respectively. The slant function
enters solely through $\cos^2\theta$, so the invariant and anti-invariant cases
arise as the limits $\theta=0$ and $\theta=\pi/2$.

\end{document}